\numberwithin{equation}{section}
\newcommand{\bbF}{\mathbb{F}}
\newcommand{\bbN}{\mathbb{N}}
\newcommand{\bbR}{\mathbb{R}}
\newcommand{\calB}{\mathcal{B}}
\newcommand{\calC}{\mathcal{C}}
\newcommand{\calD}{\mathcal{D}}
\newcommand{\calF}{\mathcal{F}}
\newcommand{\calJ}{\mathcal{J}}
\newcommand{\calL}{\mathcal{L}}
\newcommand{\calO}{\mathcal{O}}
\newcommand{\calU}{\mathcal{U}}
\newcommand{\diff}{\mathrm{d}}
\newcommand{\ds}{\diff s}
\newcommand{\dt}{\diff t}
\newcommand{\dx}{\diff x}
\newcommand{\dd}[2][{}]{\frac{\!\diff#1}{\!\diff#2}}
\newcommand{\ddt}{\dd{t}}
\newcommand{\pp}[2][{}]{\frac{\partial#1}{\partial#2}}
\newcommand{\ppt}{\pp{t}}
\newcommand{\argmin}{\mathop{\operator@font argmin}}
\newcommand{\Jac}{\mathop{\operator@font Jac}}
\newcommand{\pr}{\mathop{\operator@font pr}\nolimits}
\newcommand{\rang}{\mathop{\operator@font rang}}
\newcommand{\logloss}{\mathop{\operator@font H}\nolimits}
\newcommand{\To}{\longrightarrow}
\newcommand{\quand}{\quad\text{and}\quad}
\newcommand{\qquand}{\qquad\text{and}\qquad}
\newcommand{\powerp}{\mathsf{p}}
\begin{document}
\title{%
  Discrete Variational Calculus for Accelerated Optimization%
}
\author{%
  \name Cédric M. Campos \email cedric.mcampos@urjc.es\\
  \addr Departamento de Matemática Aplicada, Ciencia e Ingeniería\\
  de los Materiales y Tecnología Electrónica\\
  Universidad Rey Juan Carlos\\
  Calle Tulipán s/n, 28933 Móstoles, Spain\\
  \AND
  \name Alejandro Mahillo \email almahill@unizar.es\\
  \addr Departamento de Matemáticas\\
  Instituto Universitario de Matemáticas y Aplicaciones\\
  Universidad de Zaragoza\\
  C. de Pedro Cerbuna, 12, 50009, Zaragoza, Spain\\
  \AND
  \name David Martín de Diego \email david.martin@icmat.es\\
  \addr Instituto de Ciencias Matemáticas (CSIC-UAM-UC3M-UCM)\\
  Calle Nicolás Cabrera 13-15, 28049 Madrid, Spain%
}

\editor{EDITOR(S) TO BE FILLED}

\maketitle

\begin{abstract}%
Many of the new developments in machine learning are connected with gradient-based optimization methods. Recently, these methods have been studied using a variational perspective \citep{BJW}. This has opened up the possibility of introducing variational and symplectic methods using geometric integration. In particular, in this paper, we introduce variational integrators \citep{marsden-west} which allow us to derive different methods for optimization. Using both, Hamilton's and Lagrange-d'Alembert's principle, we derive two families of optimization methods in one-to-one correspondence that generalize Polyak's heavy ball \citep{Po64} and Nesterov's accelerated gradient \citep{Nesterov}, the second of which mimics the behavior of the latter reducing the oscillations of classical momentum methods. However, since the systems considered are explicitly time-dependent, the preservation of symplecticity of autonomous systems occurs here solely on the fibers. Several experiments exemplify the result.
\end{abstract}

\begin{keywords}
  Polyak's heavy ball, Nesterov's accelerated gradient, momentum methods, variational integrators, Bregman Lagrangians
\end{keywords}

\section{Introduction}
Many of the literature on machine learning and data analysis is connected with gradient-based optimization methods \citep[see][; and references therein]{Polak-book,Book-Nesterov}. The computations often involve large data and parameter sets and then, not only the compuptational efficiency is a crucial point, but the optimization theory also plays a fundamental role. A typical optimization problem is:
\begin{equation}
  \label{eq:optim}
  \argmin f(x)\,,\quad x\in Q\,,
\end{equation}
where we assume that \(Q\) is a convex set in \(\bbR^n\) and \(f\) is a continuously differentiable convex function with Lipschitzian gradient. In this case one of the most extended algorithms for \eqref{eq:optim} is Nesterov’s accelerated gradient \citep{Nesterov,SuBoCa16} which may take the following form:
\begin{align*}
  y_{k+1} ={}& x_k - \eta\nabla f(x_k)\\
  x_{k+1} ={}& y_{k+1} + \frac{k}{k+3}(y_{k+1}-y_k)
\end{align*}
starting from an initial condition \(x_0\) (see more details in Sections \ref{sec:from-GD-to-NAG} and  \ref{sec:cm-nag}). An important observation was made by \citet{SuBoCa16} showing that the continuous limit of Nesterov's method is a time-dependent second order differential equation. Moreover, \citet{WiWiJo16} show that this system of differential equations has a variational origin \citep[see also][]{Wi16}. In particular, they take as point of departure this variational approach that captures acceleration in continuous time considering a particular type of time-dependent Lagrangian functions, called Bregman Lagrangians (see Section \ref{sec:bregman-lagrangians}).

In a recent paper, \citet{BJW} introduce symplectic (and presymplectic) integrators for the differential equations associated with accelerated optimizations methods \citep[see][, for an introduction to symplectic integration]{serna,hairer,blanes}. They use the Hamiltonian formalism since it is possible to extend the phase space to turn the system into a time-independent Hamiltonian system and apply there standard symplectic techniques \citep[see][]{MaOw16,CellEhO}. For recent improvements of this approach using adaptive Hamiltonian variational integrators, see \citet{DSLeok}.

In our paper we set an alternative route: The idea is to use variational integrators adapted to an explicit time-dependent framework and external forces \citep[see][, and references therein]{marsden-west} to derive a whole family of optimizations methods. The theory of discrete variational mechanics has reached maturity in recent years by combining results of differential geometry, classical mechanics and numerical integration. Roughly speaking, the continuous Lagrangian \(L\colon TQ\to\bbR\) is substituted by a discrete Lagrangian \(L_d\colon Q\times Q\to \bbR\). Observe that, by replacing the standard velocity phase space \(TQ\) with \(Q\times Q\), we are discretizing a velocity vector by two (in principle) close points. With the unique information of the discrete Lagrangian we can define the discrete action sum and, applying standard variational techniques, we derive a system of second order difference equations known as discrete Euler-Lagrange equations. The numerical order of the methods is obtained using variational error analysis \citep[see][]{marsden-west, PatrickCuell}. Moreover, it is possible to derive a discrete version of Noether's theorem relating the symmetries of the discrete Lagrangian with conserved quantities. The derived methods are automatically symplectic and, perhaps more importantly, easily adapted to other situations as, for instance, Lie group integrators, time-dependent Lagrangians, forced systems, optimal control theory, holonomic and nonholonomic mechanics, field theories, etc.

The Lagrangian functions depicted in Section \ref{sec:bregman-lagrangians}, Bregman Lagrangians, are those explicitly time-dependent that typically arise on accelerated optimization. The geometry for time-dependent systems is different from symplectic geometry, in particular, the phase space is odd dimensional. In this case, an appropriate geometric framework is given by cosymplectic geometry \citep[see][; and references therein]{Libermann,Nicola}. In Section \ref{sec:cosym} we introduce the cosymplectic structure associated to a time-dependent Hamiltonian system (induced by a time-dependent Lagrangian) and also an interesting symplectic preservation property associated to the restriction of the Hamiltonian flow to the fibres of the projection onto the time variable (Theorem \ref{thm:wer}). Having in mind this geometrical framework we introduce in Section \ref{sec:del} discrete variational mechanics for time-dependent Lagrangians with fixed time step \citep[compare with][, for variable time step]{marsden-west}. Moreover, we recover the symplectic character on fibres of the continuous Hamiltonian flow. We show the possibility to construct variational integrators using similar techniques to the developed for the autonomous case that, in some interesting cases, are in addition explicit and, consequently, reduce the computational cost. An example of such methods is the second-order difference equation
\[
x_{k+1}=x_k-\eta_k \nabla f(x_k)+\mu_k (x_k-x_{k-1}) \,,
\]
a type of momentum-descent method widely studied in the literature and whose origin goes back to \cite{Po64}.
Momentum methods allow to accelerate gradient descent by taking into account the ``speed'' achieved by the method at the last update. However, because of that speed, momentum methods can overpass the minimum. Nesterov's method tries to anticipate future information reducing the typical oscillations of classical momentum methods towards the minimum. In Section \ref{sec:forces} we adapt our construction of variational integrators to add external forces using discrete Lagrange-d'Alembert's principle \citep[see][]{marsden-west}. Upon this machinery, we derive in Section \ref{sec:cm-nag} two families of momentum methods in mutual bijective correspondence one of which corresponds to Nesterov's method (see Theorem \ref{thm:cm-vs-nag}).
Finally, for Section \ref{sec:simulations}, many methods and numerical simulations have been implemented in Julia v1.8.2. We optimize several test functions with our methodology and other methods that appeared recently in the literature. One of the test functions is reused afterwards for a machine learning example.

\section{From Gradient Descent to Nesterov's Accelerated Gradient}
\label{sec:from-GD-to-NAG}

In this section we give a historical perspective of Nesterov's accelerated gradient from gradient descent with a threefold objective: First, properly introduce the methods of interest and their properties, second, give an overall view of the elements to take under consideration, and, third, set some of the notation.

Although the first method that comes to mind to solve the optimization problem \eqref{eq:optim} is Newton-Raphson, the first ``dynamical'' one is due to
\citet{CauchyGD}. His method, known as Gradient Descent (GD), is the one-step method
\begin{equation}
  \label{eq:gd}
  x_{k+1}=x_k-\eta\nabla f(x_k) \,,
\end{equation}
where \(\eta\) is the step size parameter or, as it is referred in the machine learning community, the learning rate. It is readily seen that this method is a simple discretization of the first order ODE
\begin{equation}
  \label{eq:gd:ode}
  \dot x = -\nabla f(x) \,,
\end{equation}
from which it takes its dynamical nature. What is perhaps not so readily seen is that, given an initial condition \(x_0\), the trajectories obtained from both equations, \(x_k\) and \(x(t)\), converge to the argument of minima \(x^*\). In particular, \(x_k\) converges linearly to \(x^*\) while the function values \(f(x_k)\) do so to the global minimum \(f(x^*)\) at a rate of \(\calO(\sfrac1k)\) \citep{Po64,Po87}.

An initial improvement over GD was given by \citet{Po64}: He introduced
a novel two-step method, Polyak's Heavy Ball (PHB), also known as Classical Momentum (CM) after \cite{pmlr-v28-sutskever13}. As it was originally presented, PHB/CM takes the form of the two-step method
\begin{equation}
  \label{eq:cm:single}
  x_{k+1}=x_k-\eta P(x_k)+\mu(x_k-x_{k-1})
\end{equation}
where \(P\) is a functional operator for which a root is sought and \(\mu,\eta\) are ``small'' positive constants that condition the convergence of the method. In comparison with \eqref{eq:gd}, \eqref{eq:cm:single} adds a new term, \(x_k-x_{k-1}\), the momentum of the discrete motion that incorporates past information in an amount controlled by \(\mu\), the so called momentum coefficient. When \(P\) is conservative, that is, when \(P=\nabla f\), Polyak showed that, although the method's trajectory still converges linearly as GD's, it does so faster than GD's, that is, with a smaller geometric ratio \citep{Po64,Po87}.

The continuous analogue of \eqref{eq:cm:single} is the second order ODE
\begin{equation}
  \label{eq:cm:ode}
  \ddot x + \nu(t)\dot x + \eta(t)P(x) = 0
\end{equation}
that turns out to be the equation of motion of a Lagrangian system when \(P=\nabla f\) (Lemma \ref{thm:sodevar}). Then \(x(t)\) traces the motion of a point mass in a well given by \(f\). We therefore drop \(P\) and stick here on with \(\nabla f\).

A further an crucial step towards improving GD (and PHB/CM) was given in \citeyear{Nesterov} by \citeauthor{Nesterov}, a former student of Polyak. He presented a new method, coined after him as Nesterov's Accelerated Gradient (NAG), similar to PHB/CM but with a slight change of unexpected consequences. A naive derivation from \eqref{eq:cm:single} is almost immediate: Introduce a new variable \(y_k\) in \eqref{eq:cm:single} so it can be easily rewritten as the equivalent method
\begin{subequations}
  \label{eq:cm}
\begin{align}
  \label{eq:cm:tail}
  y_{k+1} ={}& x_k - \eta_k\nabla f(x_k)\\
  \label{eq:cm:head}
  x_{k+1} ={}& y_{k+1} + \mu_k(x_k-x_{k-1})
\end{align}
\end{subequations}
where discrete-time dependence has been added to the coefficients \(\mu,\eta\) for convenience. Replace the \(x\)'s of the momentum term (right hand side of the second equation \eqref{eq:cm:head}) by \(y\)'s to get the new and non-equivalent method
\begin{subequations}
  \label{eq:nag}
\begin{align}
  \label{eq:nag:tail}
  \bar y_{k+1} ={}& \bar x_k - \eta_k\nabla f(\bar x_k)\\
  \label{eq:nag:head}
  \bar x_{k+1} ={}& \bar y_{k+1} + \mu_k(\bar y_{k+1}-\bar y_k)
\end{align}
\end{subequations}
where the bars are added to distinguish more easily both methods and underline
that the sequences of points that they define are in fact different. This latter method \eqref{eq:nag} is NAG as it is usually presented. Nesterov showed in turn that its method accelerates the convergence rate of the function values down to \(\calO(\sfrac1{k^2})\) \citep[see][]{Nesterov,Book-Nesterov}.

The original values of \(\eta_k,\mu_k\) given by Nesterov are rather intricate, a simpler and commonly used version is
\begin{subequations}
  \label{eq:nag:magic}
\begin{align}
  \label{eq:nag:magic:tail}
  \bar y_{k+1} ={}& \bar x_k - \eta\nabla f(\bar x_k)\\
  \label{eq:nag:magic:head}
  \bar x_{k+1} ={}& \bar y_{k+1} + \frac{k}{k+3}(\bar y_{k+1}-\bar y_k)
\end{align}
\end{subequations}
with \(\eta>0\) constant. As it is shown in \citet{SuBoCa16}, a continuous analogue of \eqref{eq:nag:magic} is
\begin{equation}
  \label{eq:nag:magic:ode}
  \ddot x + \frac3t\dot x + \nabla f(x) = 0 \,,
\end{equation}
which is but a particular case of PHB/CM's continuous analogue \eqref{eq:cm:ode}. Besides \citet{SuBoCa16} also show that the function values converge to the minimum at an inverse quadratic rate, that is, \(f(x(t)) = f(x^*) + \calO(\sfrac1{t^2})\).

More generally (Remark \ref{rmk:force.evanescence}), \eqref{eq:nag} is a natural discretization of a perturbed ODE of the form
\begin{equation}
  \label{eq:cm:ode:perturbed}
  \ddot x + \nu(t)\dot x + \eta(t)\nabla f(x) = \varepsilon F(x,\dot x,t) \,,
\end{equation}
which also is the equation of motion of a Lagrangian system (Lemma \ref{thm:sodevar}). In fact, it is this variational origin that \citet{WiWiJo16} take as point of departure. Once a particular type of time-dependent Lagrangian functions is considered, a subfamily of the so called Bregman Lagrangians, the variational approach captures acceleration in continuous-time into the derived discrete schemes achieving, in this case, a function value convergence rate of \(\calO(t^{1-n})\) with \(n\geq3\) \citep[see also][]{Wi16}.

\section{Bregman Lagrangians}
\label{sec:bregman-lagrangians}
A Bregman Lagrangian is roughly speaking a time-dependent mechanical Lagrangian whose kinetic part is close to be a metric. They are built upon Bregman divergences \citep{bregman}, a particular case of divergence functions. Bregman Lagrangians allow to define variational problems whose solutions  minimize an objective function at an exponential rate \citep{BJW}.

A \textbf{divergence function} over a manifold \(Q\) is a twice differentiable function \(\calB\colon Q\times Q\to \bbR_+\) such that for all \(x, y\in Q\) we have:
\begin{itemize}
\item \(\calB(x,y)\geq 0\) and \(\calB(x,x)=0\);
\item \(\partial_x\calB(x,x)=\partial_y\calB(x,x)\); and
\item \(\partial^2_{xy}\calB(x,x)\) is negative-definite.
\end{itemize}

Divergence functions appear as pseudo-distances that are non-negative but are not, in general, symmetric. A typical divergence function over \(Q=\bbR^n\) associated to a differentiable strictly convex function \(\Phi\colon \bbR^n\to \bbR\) is the \textbf{Bregman divergence}:
\[
\calB_{\Phi}(x,y)=\Phi(x)-\Phi(y)-\langle d \Phi(y), x-y \rangle \,.
\]
Observe that it is the remainder of the first order Taylor expansion of \(\Phi\) around \(y\) evaluated at \(x\), a sort of Hessian metric.

Given a Bregman divergence over \(\bbR^n\), let us consider the time-dependent kinetic energy
\[
K(x,\dot{x},t)=\calB_{\Phi}(x+e^{-\alpha(t)}\dot{x}, x)
\]
and the time-dependent potential energy
\[
U(x,t)=e^{\beta(t)}f(x) \,,
\]
from which define the \textbf{Bregman Lagrangian} \(L\colon T\bbR^n\times T\bbR\to\bbR\) by
\begin{align*}
	L(x,\dot{x},t)={}&e^{\alpha(t)+\gamma(t)}\big(K(x,\dot{x},t)-U(x,t)\big)\\
	={}&e^{\alpha(t)+\gamma(t)}\left(\Phi(x+e^{-\alpha (t)}\dot{x})-\Phi(x) -e^{-\alpha (t)}\langle d\Phi(x), \dot{x}\rangle-e^{\beta(t)}f(x)\right) \,,
\end{align*}
where the time-dependent functions \(\alpha(t), \beta(t), \gamma(t)\) are chosen to produce different algorithms. These functions verify what \citet{WiWiJo16} refer to as \textbf{ideal scaling conditions}, namely,
\begin{equation}
  \label{eq:ideal-conds}
  \dot\gamma(t)=e^{\alpha(t)}
  \qquand
  \dot\beta(t)\leq e^{\alpha(t)}
  \,.
\end{equation}
The first condition greatly simplifies several expressions that can be derived from the Bregman Lagrangian. For instance, when \(\dot\gamma(t)=e^{\alpha(t)}\) is met, the associated Euler-Lagrange equations reduce to
\[
  \nabla^2\Phi\left( x+e^{-\alpha(t)} \dot x \right)
  \left[ \ddt\left(  x+e^{-\alpha(t)} \dot x \right) \right]
  + e^{\alpha(t)+\beta(t)}\nabla f(x) = 0 \,.
\]
The second condition ensures convergence of the underlying trajectories to the minimum at rate non-slower than \(\calO(e^{-\beta(t)})\)

In the particular case where \(\Phi(x)=\tfrac12\|x\|^2\), for which \(\calB_{\Phi}(x,y)=\tfrac12\|x-y\|^2\), the Bregman Lagrangian takes the simple form
\begin{equation}
  \label{eq:bregman.lagrangian:euclidean}
  L(x,\dot{x},t) = a(t)\tfrac12\|\dot{x}\|^2-b(t)f(x) \,,
\end{equation}
with \(a(t)=e^{\gamma(t)-\alpha(t)}\) and \(b(t)=e^{\alpha(t)+\beta(t)+\gamma(t)}\).

\section{Geometry of the Time-Dependent Lagrangian and Hamiltonian Formalisms}
\label{sec:cosym}

Since Bregman Lagrangians are time-dependent, in this section, we introduce some needed geometric ingredients about non-autonomous mechanics and highlight some of their main invariance properties \citep[see][]{Abraham1978, marle, deLeon1987}.

Let \(Q\) be a manifold and \(TQ\) its tangent bundle. Coordinates \((x^i)\) on \(Q\) induce coordinates \((x^i, \dot{x}^i)\) on \(TQ\). Therefore we have natural coordinates \((x^i, \dot{x}^i, t)\) on \(TQ\times \bbR\) which is the velocity phase space for time-dependent systems.

Given two instants (time values) \(a, b\in \bbR\), with \(a<b\), and corresponding positions \(x_a, x_b\in Q\), consider the set of curves:
\[
  \calC^2_{a,b} =
  \calC^2([a, b], x_a, x_b) =
  \{ \sigma\colon [a, b]\to Q\; |\;
  \sigma\in \calC^2 \text{ with } \sigma(a)=x_a, \; \sigma(b)=x_b\} \,.
\]
Given a time-dependent Lagrangian function \(L\colon TQ\times\bbR\to\bbR\), define the action functional \(\calJ_L\colon\calC^2_{a,b}\to \bbR\)
\begin{equation}
  \label{eq:action}
  \calJ_L(\sigma) = \int_a^b L(\sigma'(t),t)\,\dt
\end{equation}
where \(\sigma'\colon [a, b]\to TQ\).

Using variational calculus, the critical points of \(\calJ_L\) are locally characterized by the solutions of the \textbf{Euler-Lagrange equations}:
\begin{equation}
  \label{eq:el}
  \ddt\left(\pp[L]{\dot x^i}\right)-\pp[L]{x^i}=0\,, \quad 1\leq i\leq n=\dim Q \,.
\end{equation}
For time-dependent Lagrangians it is possible to check that the energy \(E_L\colon TQ\times \bbR\to \bbR\),
\[
  E_L=\Delta L-L=\dot{x}^i\frac{\partial L}{\partial \dot{x}^i}-L \,,
\]
where \(\Delta\) is the Liouville vector field on \(TQ\) \citep{marle}, is not, in general, preserved since
\[
  \dd[E_L]t=\pp[L]t \,.
\]

We now pass to the Hamiltonian formalism using the \textbf{Legendre transformation}
\[
\calF L\colon TQ\times\bbR\To T^*Q\times\bbR \,,
\]
where \(T^*Q\) is the cotangent bundle of \(Q\) whose natural coordinates are \((x^i, p_i)\). The Legendre transformation is locally given by
\[
  \calF L (x^i, \dot{x}^i, t) = \left(x^i, \pp[L]{\dot x^i}, t\right) \,.
\]

We assume that the Legendre transformation is a diffeomorphism (that is, the Lagrangian is hyperregular) and define the Hamiltonian function \(H\colon T^*Q\times \bbR\to \bbR\) by
\[
  H=E_L\circ (\calF L)^{-1} \,,
\]
which induces the cosymplectic structure \((\Omega_H, \eta_\bbR)\) on \(T^*Q\times \bbR\) with
\[
  dt\vcentcolon=\pr_2^*\dt\; , \qquad \Omega_H=-\diff(\pr_1^*\theta_Q-H dt)=\Omega_Q+\diff H\wedge dt \,,
\]
where \(\pr_i\), \(i=1,2\), are the projections to each Cartesian factor and \(\theta_Q\) denotes the Liouville 1-form on \(T^*Q\) \citep{Abraham1978}, given in induced coordinates by \(\theta_Q=p_i\, \dx^i\). We also denote by \(\Omega_Q=-\diff\pr_1^*\theta_Q\) the pullback of the canonical symplectic 2-form \(\omega_Q=-\diff\theta_Q\) on \(T^*Q\). In coordinates, \(\Omega_Q=\dx^i\wedge \diff p_i\). (Observe that now \(\Omega_Q\) is presymplectic since \(\ker \Omega_Q=\text{span}\{\partial/\partial t\}\).) Therefore in induced coordinates \((x^i, p_i, t)\):
\[
  \Omega_H=\dx^i\wedge \diff p_i+\diff H\wedge\dt \,,
  \qquad
  \eta_\bbR=\dt \,.
\]

We define the \textbf{evolution vector field} \(E_H\in \mathfrak{X}(T^*Q\times \bbR)\) by
\begin{equation}
  \label{eq:qqq}
	i_{E_H}\Omega_H=0\; ,\qquad i_{E_H}dt=1
\end{equation}
In local coordinates the evolution vector field is:
\[
  E_H=\ppt+\pp[H]{p_i}\pp{x^i}-\pp[H]{x^i}\pp{p_i} \,.
\]
The integral curves of \(E_H\) are given by:
\begin{equation}\label{eq:hamiltonian}
  \dot t=1\,,\qquad \dot x^i=\pp[H]{p_i} \,, \qquad \dot p_i=-\pp[H]{x^i}\,.
\end{equation}
The integral curves of \(E_H\) are precisely the curves of the form \(t\mapsto\calF L(\sigma'(t),t)\) where \(\sigma\colon I\to Q\) is a solution of the Euler-Lagrange equations for \(L\colon TQ\times \bbR\to \bbR\).

From Equation \eqref{eq:qqq} we deduce that the flow of \(E_H\) verifies the following preservation properties
\begin{equation}
  \label{eq:kop}
 \calL_{E_H}\Omega_H = \calL_{E_H}( \Omega_Q+\diff H\wedge dt) = 0 \,,
  \qquad
  \calL_{E_H} dt = 0 \,.
\end{equation}
Denote by \(\Psi_s\colon \calU\subset T^* Q\times \bbR\To T^* Q\times \bbR\) the flow of the evolution vector field \(E_H\), where \(\calU\) is an open subset of \(T^*Q\times \bbR\).
Observe that
\[
  \Psi_s(\alpha_q,t) = (\Psi_{t,s}(\alpha_q),t+s) \,,
  \ \alpha_q\in T_q^* Q \,,
\]
where \(\Psi_{t,s}(\alpha_q)=\pr_1(\Psi_s(\alpha_q, t))\).
Therefore from the flow of \(E_H\) we induce a map
\[
\Psi_{t,s}\colon\calU_t\subseteq T^*Q\to T^*Q
\]
where \(\calU_t=\{ \alpha_q\in T^*Q \;|\; (\alpha_q,t)\in \calU \}\). Observe that if we know \(\Psi_{t,s}\) for all \(t\), we can recover the flow \(\Psi_s\) of \(E_H\).

From Equations \eqref{eq:kop} we deduce that
\begin{equation}\label{eq:preser}
  \Psi_s^*(\Omega_Q+\diff H\wedge dt)=\Omega_Q+\diff H\wedge dt \,,
  \qquad
	\Psi_s^*(dt)=dt \,.
\end{equation}
The following theorem relates the  preservation properties (\ref{eq:preser}) with the symplecticity of the map family \(\{\Psi_{t,s}\colon T^*Q\to T^*Q\}\).

\begin{theorem}
  \label{thm:wer}
  We have that \(\Psi_{t,s}\colon \calU_t\subseteq T^*Q\to T^*Q\) is a symplectomorphism, that is, \(\Psi_{t,s}^*\omega_Q=\omega_Q\).
\end{theorem}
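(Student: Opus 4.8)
The plan is to relate the fibrewise maps $\Psi_{t,s}$ back to the full flow $\Psi_s$ of $E_H$ through the time-$t$ inclusion $j_t\colon T^*Q\to T^*Q\times\bbR$, $\alpha_q\mapsto(\alpha_q,t)$, and then to exploit the already-established preservation property $\Psi_s^*\Omega_H=\Omega_H$. The starting observation is the intertwining identity that follows directly from the given decomposition $\Psi_s(\alpha_q,t)=(\Psi_{t,s}(\alpha_q),t+s)$, namely
\[
  \Psi_s\circ j_t = j_{t+s}\circ\Psi_{t,s}\,.
\]
Taking pullbacks reverses the order, so that $\Psi_{t,s}^*\circ j_{t+s}^* = j_t^*\circ\Psi_s^*$ as operators on differential forms, and I would apply both sides to $\Omega_H$.

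The key computation is then $j_t^*\Omega_H=\omega_Q$, uniformly in $t$. Since $t\circ j_t$ is the constant function with value $t$, we have $j_t^*\dt=0$, which annihilates the whole $\diff H\wedge\dt$ term of $\Omega_H$. For the remaining piece, recall that $\Omega_Q=\text{pr}_1^*\omega_Q$ (as $\Omega_Q=-\diff\,\text{pr}_1^*\theta_Q=\text{pr}_1^*(-\diff\theta_Q)$) and that $\text{pr}_1\circ j_t=\mathrm{id}_{T^*Q}$, whence $j_t^*\Omega_Q=(\text{pr}_1\circ j_t)^*\omega_Q=\omega_Q$. Combining, $j_t^*\Omega_H=j_t^*\Omega_Q+j_t^*(\diff H\wedge\dt)=\omega_Q$ for every $t$.

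With these two ingredients the result falls out: applying $\Psi_{t,s}^*\circ j_{t+s}^*=j_t^*\circ\Psi_s^*$ to $\Omega_H$ gives, on the left, $\Psi_{t,s}^*(j_{t+s}^*\Omega_H)=\Psi_{t,s}^*\omega_Q$, and on the right, $j_t^*(\Psi_s^*\Omega_H)=j_t^*\Omega_H=\omega_Q$, where the preservation property deduced from \eqref{eq:kop} is used to replace $\Psi_s^*\Omega_H$ by $\Omega_H$. Hence $\Psi_{t,s}^*\omega_Q=\omega_Q$, i.e.\ each $\Psi_{t,s}$ is a symplectomorphism of $(T^*Q,\omega_Q)$.

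I do not expect a serious obstacle here, as the argument is essentially a bookkeeping of pullbacks; the only point requiring care is the time indexing — the inclusion appearing on the left is $j_{t+s}$ while that on the right is $j_t$ — which is precisely where the non-autonomous, flow-on-fibres structure manifests itself. One should also note that the $\diff H\wedge\dt$ term is genuinely needed in the preservation statement $\Psi_s^*\Omega_H=\Omega_H$, even though it vanishes under each $j_t^*$: the flow $\Psi_s$ moves points off the fibre $\{t\}$ and back onto the fibre $\{t+s\}$, so the time-mixing term cannot simply be discarded before pulling back.
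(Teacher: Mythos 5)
Your proof is correct, and its execution differs from the paper's in a way worth noting. The paper argues pointwise at the level of tangent vectors: it decomposes $Y_{(\alpha_q,t)}=Y_{\alpha_q}(t)+Y_t(\alpha_q)$, uses the second preservation property in \eqref{eq:kop}, $\Psi_s^*\eta=\eta$, to show that $T\Psi_s$ carries vectors tangent to the $\mathrm{pr}_2$-fibers to vectors tangent to the $\mathrm{pr}_2$-fibers, identifies $T\Psi_s$ with $T\Psi_{t,s}$ on such vectors, and then evaluates the preserved form $\Omega_Q+\diff H\wedge\dt$ on pairs of fiber-tangent vectors, where it reduces to $\omega_Q$. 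You instead package the whole argument functorially through the inclusions $j_t$: the identity $\Psi_s\circ j_t=j_{t+s}\circ\Psi_{t,s}$ (a restatement of the displayed form $\Psi_s(\alpha_q,t)=(\Psi_{t,s}(\alpha_q),t+s)$), the computation $j_t^*\Omega_H=\omega_Q$, and the preservation $\Psi_s^*\Omega_H=\Omega_H$ combine by pure pullback bookkeeping to give the result. The underlying idea is the same in both — $\Omega_H$ restricts to $\omega_Q$ on each fiber, and the flow respects the fibration — but your route has two advantages: it never needs $\Psi_s^*\eta=\eta$ as a separate input, since differentiating the intertwining identity already yields the fiber-tangency preservation that the paper extracts from $\eta$-invariance; and it replaces the pointwise identification of $V_{(\alpha_q,t)}\mathrm{pr}_2$ with $T_{\alpha_q}T^*Q$ by the cleaner statements $\mathrm{pr}_1\circ j_t=\mathrm{id}$ and $j_t^*\dt=0$. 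The paper's vector-level argument, on the other hand, makes visible exactly which structure ($\eta$ and its invariance) is responsible for the flow respecting the fibers, which is the cosymplectic-geometric point the section is emphasizing. Your closing caveat about the time indexing ($j_{t+s}$ on one side, $j_t$ on the other) and about not discarding the $\diff H\wedge\dt$ term before pulling back is exactly right.
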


\begin{proof}
   First, observe  that any vector \(Y_{(\alpha_q,t)} \in T_{(\alpha_q, t)} (T^*Q\times \bbR)\) admits a unique decomposition:
  \[
    Y_{(\alpha_q,t)}=Y_{\alpha_q}(t)+Y_t(\alpha_q) \,,
  \]
  where \(Y_{\alpha_q}(t)\in T_{\alpha_q}T^*Q\) and \(Y_t (\alpha_q)\in T_t\bbR\).
  Moreover, we have that  \(\langle dt, Y_{\alpha_q}(t)\rangle=0\).

  Therefore, if we restrict ourselves to vectors tangent to the \(\pr_2\)-fibers  \(Y_{(\alpha_q,t)}\in T_{(\alpha_q, t)} \pr_2^{-1}(t)=V_{(\alpha_q,t)}\pr_2\) then we have the decomposition
  \[
    Y_{(\alpha_q,t)}=Y_{\alpha_q}(t)+0_t=Y_{\alpha_q}(t)\in V_{(\alpha_q,t)}\pr_2\equiv T_{\alpha_q}T^* Q \,.
  \]
  From the second preservation property given in \eqref{eq:kop} we deduce that
  \[
    0=\langle (dt)_{(\alpha_q, t)}, Y_{\alpha_q}(t)\rangle=\langle (\Psi_s^*dt)_{(\alpha_q, t)}, Y_{\alpha_q}(t)\rangle =\langle (dt)_{\Psi_s(\alpha_q, t)}, T\Psi_s (Y_{\alpha_q}(t))\rangle
  \]
  Therefore \(T\Psi_s (Y_{\alpha_q}(t))\in V_{(\Psi_{s, t}(\alpha_q),t+s)}pr_2\equiv T_{\Psi_{t,s}(\alpha_q)}T^*Q\) and
  \[
    T\Psi_s (Y_{\alpha_q}(t))=T\Psi_{t,s} (Y_{\alpha_q}(t))+0_{t+s}\equiv T\Psi_{t,s} (Y_{\alpha_q}(t)) \,.
  \]
 Using the first identity in \eqref{eq:kop} we deduce that
  \begin{align*}
    (\omega_Q)_{\alpha_q}(Y_{\alpha_q}(t), \tilde{Y}_{\alpha_q}(t))
    ={}&(\Omega_Q+\diff H\wedge\dt)_{(\alpha_q, t)} (Y_{\alpha_q}(t), \tilde{Y}_{\alpha_q}(t))\\
    ={}&(\Omega_Q+\diff H\wedge\dt)_{(\Psi_{s,t}(\alpha_q), t+s)} (T\Psi_s (Y_{\alpha_q}(t)), T\Psi_s (\tilde{Y}_{\alpha_q}(t)))\\
    ={}&(\omega_Q)_{\Psi_{s,t}(\alpha_q)} (T\Psi_{t,s} (Y_{\alpha_q}(t)), T\Psi_{t,s} (\tilde{Y}_{\alpha_q}(t)))
	\end{align*}
  where \(Y_{\alpha_q}(t), \tilde{Y}_{\alpha_q}(t)\in T_{\alpha_q}T^*Q\equiv T_{(\alpha_q, t)}\pr_2^{-1}(t)\). We conclude that \(\Psi_{t,s}^*\omega_Q=\omega_Q\).
\end{proof}

\section{Discrete Variational Methods for Time-Dependent Lagrangian Systems}
\label{sec:del}

Consider the set of discrete paths (or sequences) on \(Q\) for a fixed number of steps  \(N\in\bbN\), that is, the set
\[
  \calC_d(Q)=\left\{ x_d\colon\left\{0,1,\dots,N\right\} \to Q \right\} = Q\times \stackrel{(N+1)}{\cdots} \times Q \,.
\]
Then an appropriate discrete interpretation of the velocities are pairs in \(Q\times Q\), a discrete version of \(TQ\).

A \textbf{discrete time-dependent Lagrangian} is a family of maps
\[
  L^k_d\colon Q\times Q\to \bbR, \qquad k\in\bbN \,,
\]
for which we define the \textbf{discrete action map} on the space of sequences as
\[
	S_d (x_d)=\sum_{k=0}^{N-1}L^k_d(x_k,x_{k+1}), \quad x_d\in \calC_d(Q) \,.
\]
If we consider variations of \(x_d\) with fixed end points \(x_0\) and \(x_N\) and extremize \(S_d\) over \(x_1,\ldots,x_{N-1}\), we obtain the \textbf{discrete Euler-Lagrange equations} (DEL for short)
\[
	\partial_{x_k} S_d (x_d)=D_1 L^k_d (x_k,x_{k+1})+D_2L^{k-1}_d(x_{k-1},x_k)=0 \ \mbox{ for all } \ k=1,\ldots,N-1 \, .
\]
where \(D_1\) and \(D_2\) denote the partial derivatives with respect to the first and second components, respectively.

If, for all \(k\), \(L^k_d\) is regular, that is, the matrix
\[
	D_{12}L^k_d =\left(\pp[{}^2 L_d^k]{x_k\partial x_{k+1}}\right)
\]
is non-singular, then we locally obtain a well defined family of discrete Lagrangian maps:
\[
	\begin{array}{cccc}
    F_{k,k+1}\colon & Q\times Q & \To & Q \times Q \\
                    & (x_k,x_{k+1}) & \longmapsto & (x_{k+1},x_{k+2}(x_k,x_{k+1},k)) \, .
	\end{array}
\]
where the value of \(x_{k+2}\) is determined in terms of \(x_k\), \(x_{k+1}\) and \(k\).
In this setting, we can define two discrete Legendre transformations associated to \(L^k_d\), \(\bbF^\pm L^k_d\colon Q\times Q\to T^*Q\), by the expressions
\begin{align*}
  \bbF^+L^k_d\colon (x_k,x_{k+1}) \longmapsto{}& (x_{k+1},D_2L^k_d(x_k,x_{k+1})) \,,\\
  \bbF^-L^k_d\colon (x_k,x_{k+1}) \longmapsto{}& (x_k,-D_1L^k_d(x_k,x_{k+1})) \,.
\end{align*}

We can also define the evolution of the discrete system on the Hamiltonian side, \(\tilde{F}_{k, k+1}\colon T^*Q\to T^*Q\), by any of the formulas:
\[
	\tilde{F}_{k,k+1}
  = \bbF^+L^k_d\circ (\bbF^-L^k_d)^{-1}
  = \bbF^+L^k_d\circ F_{k-1,k} \circ (\bbF^+L^{k-1}_d)^{-1}
  = \bbF^-L^{k+1}_d\circ F_{k,k+1} \circ (\bbF^-L^k_d)^{-1} \,,
\]
because of the commutativity of the following diagram.
\[
	\xymatrix{
		(x_{k-1},x_k) \ar[rr]^{F_{k-1,k}} \ar[dr]_{\bbF^+L^{k-1}_d} & & (x_k,x_{k+1}) \ar[ld]_{\bbF^-L^k_d} \ar[rd]_{\bbF^+L^k_d} \ar[rr]^{F_{k, k+1}} & & (x_{k+1},x_{k+2}) \ar[ld]_{\bbF^-L^{k+1}_d} \\
		& (x_k,p_k) \ar[rr]_{\tilde{F}_{k,k+1}} & & (x_{k+1},p_{k+1}) &
	}
\]

\begin{proposition}
	The discrete Hamiltonian map \(\tilde{F}_{k, k+1}\colon(T^*Q,\omega_Q) \To (T^*Q,\omega_Q)\) is a \textbf{symplectic transformation}, that is
  \[
    (\tilde{F}_{k, k+1})^*\omega_Q=\omega_Q\; .
  \]
\end{proposition}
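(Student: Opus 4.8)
The plan is to follow the classical route of discrete variational mechanics: exploit the fact that the two discrete Legendre transforms $\mathbb{F}^{\pm}L^k_d$ each pull the Liouville form $\theta_Q=p_i\,\dx^i$ back to a one-form on $Q\times Q$, and that these two pullbacks differ by an exact form. First I would introduce the discrete Lagrange one-forms obtained as these pullbacks. Using the defining coordinate formulas for $\mathbb{F}^{\pm}L^k_d$ together with $\theta_Q=p_i\,\dx^i$, one reads off
\[
  \theta^{+}_{L^k_d}=(\mathbb{F}^{+}L^k_d)^{*}\theta_Q=D_2L^k_d(x_k,x_{k+1})\,\diff x_{k+1},
  \qquad
  \theta^{-}_{L^k_d}=(\mathbb{F}^{-}L^k_d)^{*}\theta_Q=-D_1L^k_d(x_k,x_{k+1})\,\diff x_k,
\]
where $\theta^{+}$ lives in the fibre over $x_{k+1}$ and $\theta^{-}$ in the fibre over $x_k$.

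The key step, which is really the heart of the matter, is the observation that the exterior derivative of the discrete Lagrangian on $Q\times Q$ splits as
\[
  \diff L^k_d=D_1L^k_d\,\diff x_k+D_2L^k_d\,\diff x_{k+1}=\theta^{+}_{L^k_d}-\theta^{-}_{L^k_d}.
\]
Applying $\diff$ to both sides and using $\diff^2=0$ gives $\diff\theta^{+}_{L^k_d}=\diff\theta^{-}_{L^k_d}$. Since $\omega_Q=-\diff\theta_Q$ and pullback commutes with $\diff$, this is precisely the single identity
\[
  (\mathbb{F}^{+}L^k_d)^{*}\omega_Q=(\mathbb{F}^{-}L^k_d)^{*}\omega_Q.
\]

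Finally I would combine this identity with the defining factorization $\tilde{F}_{k,k+1}=\mathbb{F}^{+}L^k_d\circ(\mathbb{F}^{-}L^k_d)^{-1}$. Pulling back $\omega_Q$ and using $(g\circ f)^{*}=f^{*}\circ g^{*}$ yields
\[
  (\tilde{F}_{k,k+1})^{*}\omega_Q
  =\bigl((\mathbb{F}^{-}L^k_d)^{-1}\bigr)^{*}(\mathbb{F}^{+}L^k_d)^{*}\omega_Q
  =\bigl((\mathbb{F}^{-}L^k_d)^{-1}\bigr)^{*}(\mathbb{F}^{-}L^k_d)^{*}\omega_Q
  =\omega_Q,
\]
where the middle equality is the identity above and the last uses that pullback by $\mathbb{F}^{-}L^k_d\circ(\mathbb{F}^{-}L^k_d)^{-1}=\mathrm{id}$ is the identity. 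Here the regularity hypothesis that $D_{12}L^k_d$ be non-singular is exactly what makes $\mathbb{F}^{-}L^k_d$ a local diffeomorphism, so that the inverse and hence the factorization are well defined.

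The main obstacle is not any single computation but the careful bookkeeping of signs and base points in the two Legendre transforms: $\mathbb{F}^{+}L^k_d$ lands over $x_{k+1}$ while $\mathbb{F}^{-}L^k_d$ lands over $x_k$, and it is precisely this matching of which differential ($\diff x_k$ versus $\diff x_{k+1}$) appears with which sign that makes the clean decomposition $\diff L^k_d=\theta^{+}_{L^k_d}-\theta^{-}_{L^k_d}$ hold. I would verify these conventions against the coordinate expressions for $\mathbb{F}^{\pm}L^k_d$ before taking any exterior derivatives.
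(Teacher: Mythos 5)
Your proposal is correct and follows essentially the same route as the paper: establish the key identity \((\mathbb{F}^{+}L^{k}_{d})^{*}\omega_Q=(\mathbb{F}^{-}L^{k}_{d})^{*}\omega_Q\) and then conclude via the factorization \(\tilde{F}_{k,k+1}=\mathbb{F}^{+}L^k_{d}\circ(\mathbb{F}^{-}L^k_{d})^{-1}\) and functoriality of pullback. The only difference is that the paper delegates the key identity to the autonomous-case argument in Marsden--West, whereas you spell it out explicitly through \(\diff L^k_d=\theta^{+}_{L^k_d}-\theta^{-}_{L^k_d}\) and \(\diff^2=0\), which is precisely the argument that citation refers to.
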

\begin{proof}
  Using similar arguments to the autonomous case \citep{marsden-west}, we deduce that
  \[
    (\bbF^+L^k_d)^*\omega_Q=(\bbF^-L^k_d)^*\omega_Q \,.
  \]
  From the definition of \(\tilde{F}_{k,k+1}: T^*Q\rightarrow T^*Q\) we deduce that
  \[
    (\tilde{F}_{k,k+1})^*\omega_Q
    = (\bbF^+L^k_d\circ (\bbF^-L^k_d)^{-1})^*\omega_Q
    = ((\bbF^-L^k_d)^*)^{-1}((\bbF^+L^k_d)^*\omega_Q)
    = \omega_Q \,.
  \]
\end{proof}

Given the map \(\tilde{F}_{k, k+1}(q_k, p_k)=(q_{k+1}, p_{k+1})\), we immediately have the map
\[
  (x_k, p_k,kh)\mapsto(x_{k+1}, p_{k+1},(k+1)h)
\]
on \(T^*Q\times\bbR\) where we now give explicit information of the evolution of discrete time.

Let see the relation of the of these discrete maps \(F_{k,k+1}\colon Q\times Q\to Q\times Q\) and \(\tilde{F}_{k, k+1}\colon T^*Q\to T^*Q\)
with the Euler-Lagrange equations and Hamilton equations of a time-dependent Lagrangian system. Given a regular Lagrangian function \(L\colon TQ\times \bbR \to \bbR\) and a sufficiently small time step \(h>0\), we are going to define an \(h\)-and-\(k\)-dependent family of discrete Lagrangian functions \(L^k_{d,h}\colon Q\times Q \to \bbR\) as an infinitesimal approximation to the continuous action \(\calJ_L\) defined in expression \eqref{eq:action}. As intermediate step, we first consider the \textbf{exact time-dependent discrete Lagrangian} associated to a regular Lagrangian \(L\) which is given by the expression
\[
  L^{k,E}_{d,h}(x_0,x_1) = \frac1h\int_{kh}^{(k+1)h} L(x_{0,1}(t),\dot x_{0,1}(t),t)\,\dt \,,
\]
where \(x_{0,1}(t)\) is the unique solution of the Euler-Lagrange equations for \(L\) satisfying \(x_{0,1}(kh)=x_0\) and \(x_{0,1}((k+1)h)=x_1\), \citep[see][]{hartman,MMdDM2016}. Then for a sufficiently small \(h\), the solutions of the DEL for \(L^{k,E}_{d,h}\) lie on the solutions of the Euler-Lagrange equations for \(L\), see \cite[Theorem 1.6.4]{marsden-west}.

In practice, \(L^{k,E}_{d,h}(x_0,x_1)\) will not be available, therefore we take an approximation,
\[
	L^k_{d,h}(x_0,x_1) \approx L^{k,E}_{d,h}(x_0,x_1)\, ,
\]
using some quadrature rule. Then, as we have seen, the scheme derived from the DEL will be geometric integrators for Equations (\ref{eq:hamiltonian}) preserving the symplectic form in the sense of Theorem \ref{thm:wer} \citep[see][]{PatrickCuell}.

\begin{remark}
  As we have commented on the introduction, one of the main advantages of the proposed approach is the possibility to use other options to derive different numerical methods for optimization by only discretizing a unique function, the action functional. Of course, there are many different ways to do it \citep{marsden-west}. For instance, we can combine several discrete Lagrangians together to obtain a new discrete Lagrangian with higher order (composition methods) or similarly obtaining splitting methods \citep{CamposSS17}. Also we can easily derive symplectic partitioned Runge-Kutta methods or symplectic Garlekin methods using polynomial approximations to the trajectories and a numerical quadrature to approximate the action integral \citep{Campos14}. Moreover, it is possible to adapt the variational integrators to a non-euclidean setting using appropriate retraction maps.
\end{remark}

\section{Discretization of Lagrangian Systems with Forces}
\label{sec:forces}

Our intention here is to continue looking for numerical approximations to the time-dependnet Euler-Lagrange equations  but considering additionally an external force that decreases jointly with the time-step parameter \(h\). With it we will obtain a whole family of algorithms whose behavior resembles that of the Nesterov method.
Fortunately, discrete mechanics is also adapted to the case of external forces \citep[see][]{marsden-west}. To this end, in addition to a time-dependent Lagrangian function \(L\colon TQ\times \bbR\to \bbR\), we have an external force given by a fiber preserving map \(F\colon TQ\times \bbR \to T^*Q\) given locally by
\[
  F(x,\dot x,t)=(x ^i, F_i(x,\dot x,t))
\]
Given the force \(f\), we derive the equations of motion of the forced system modifying the Hamilton’s principle to the \textbf{Lagrange-d’Alembert principle}, which seeks curves \(\sigma\in\calC^2_{a, b}\) satisfying
\begin{equation}\label{ldp}
  \delta \int_a^b L(\sigma'(t), t)\;\dt+\int_a^b F(\sigma'(t), t)\delta \sigma(t)\;\dt=0\; ,
\end{equation}
for all \(\delta \sigma\in T_{\sigma}\calC^2_{a, b}\).
Using integration by parts we derive the forced Euler-Lagrange equations, which have the following coordinate expression:
\[
  \ddt\left( \pp[L]{\dot x^i}\right) -\pp[L]{x^i}=F_i \,.
\]

To discretize these equations we consider as before a family of Lagrangian functions \(L_d^k\colon Q\times Q\to \bbR\) and two discrete forces \((F^k_d)^\pm\colon Q \times Q\to T^*Q\), which are fiber preserving in the sense that \(\pi_Q \circ (F^k_d)^\pm = \pr_\pm\), where \(\pr_\pm(x_-, x_+)=x_\pm\). Combing both forces we obtain \(F^k_d\colon Q\times Q\to T^*(Q\times Q)\) by
\[
  \langle F^k_d(x_k, x_{k+1}), (\delta x_k, \delta x_{k+1})\rangle= (F^k_d)^-(x_k, x_{k+1})\delta x_k+(F^k_d)^+(x_k, x_{k+1})\delta x_{k+1} \,.
\]
As in \eqref{ldp} we have a discrete version of the Lagrange-d'Alembert principle for the discrete forced system given by \(L^k_d\) and \(F^k_d\):
\[
  \delta \sum_{k=0}^{N-1} L_d^k(x_k,x_{k+1})
  + \sum_{k=0}^{N-1} \langle F^k_d(x_k,x_{k+1}) , (\delta x_k,\delta x_{k+1}) \rangle
  = 0 \,,
\]
for all variations \(\{\delta x_k\}_{k=0}^N\) vanishing at the endpoints, that is, \(\delta x_0=\delta x_N=0\).  This is equivalent to the forced discrete Euler-Lagrange equations:
\begin{equation}\label{eq:forcedsystem}
  D_1L^k_d(x_k,x_{k+1}) + D_2L^{k-1}_d(x_{k-1},x_k)
  + (F^k_d)^-(x_k, x_{k+1}) +(F^{k-1}_d)^+(x_{k-1}, x_k)
  = 0 \,,
\end{equation}
for all \(k=1,\dots,N-1\).

\section{The Variational Derivation of PHB/CM and NAG}
\label{sec:cm-nag}

As seen in Section \ref{sec:from-GD-to-NAG}, NAG can be derived naively from PHB/CM.
Besides, under suitable conditions on \(\mu,\eta\) and the starting points,
both methods converge to a minimum of \(f\), the latter, NAG, doing so faster
\citep{Po64,Nesterov}. Questions arise: What makes NAG faster than PHB/CM? Can this
be exploited to obtain even faster methods? Can it be generalized? Questions
that boil down to how NAG is fundamentally derived from PHB/CM.

To begin with, note that the NAG equations \eqref{eq:nag} can be rewritten only
in terms of the \(x\)'s, as in \eqref{eq:cm:single}, or only in terms of the
\(y\)'s, yielding the equations
\begin{subequations}
  \label{eq:nag:xing-yang}
\begin{align}
  \label{eq:nag:xing}
  \Delta \bar x_k ={}& \mu_{k\phantom{\!-1}} \Delta \bar x_{k-1} - \eta_k\nabla f(\bar x_k) - \mu_k \big(\eta_k\nabla f(\bar x_k) - \eta_{k-1}\nabla f(\bar x_{k-1}) \big)\\
  \label{eq:nag:yang}
  \Delta \bar y_k ={}& \mu_{k-1}\Delta \bar y_{k-1} - \eta_k\nabla f(\bar y_k+\mu_{k-1}\Delta \bar y_{k-1})
\end{align}
\end{subequations}
The first, Eq. \eqref{eq:nag:xing}, when compared to \eqref{eq:cm:single} shows
an extra term, \[\mu_k \big(\eta_k\nabla f(\bar x_k) - \eta_{k-1}\nabla f(\bar
  x_{k-1}) \big)\; ,\] that in fact points to the very origin of the method: an
additional forcing term. The second, Eq. \eqref{eq:nag:yang}, compared again to
\eqref{eq:cm:single} shows that the \(y\)-trajectory is obtained almost as if it
was computed by PHB/CM but evaluating \(\nabla f\) at a ``future'' point, \(\bar
y_k+\mu_{k-1}\Delta \bar y_{k-1}\), which ``informs better'' the method on how
to advance towards the minimum.

Besides of the convergence towards the minimum, it can be shown that both
methods are a time discretization of the second order differential equation
\citep{SuBoCa16}
\begin{equation}
  \label{eq:sode-cm}
  \ddot x + \nu(t)\dot x + \eta(t)\nabla f(x) = 0 \,,
\end{equation}
a well known fact in the literature, Equation that furthermore is in general
variational (see Lemma \ref{thm:sodevar}). A fact that is not so well known is
that NAG better discretizes the equation when including a force term
proportional to the underlying time step and, moreover, it can be derived, as
well as PHB/CM, from a variational approach, in the geometric integration sense (see
Sections \ref{sec:del} and \ref{sec:forces}).

We first give a rather simple and direct result whose purpose is to establish
properly the continuous setting over which the latter discretizations will be
built and from which the methods can be derived.

\begin{lemma}
  \label{thm:sodevar}
  Given a vector field \(P\colon\bbR^n\to\bbR^n\), consider the second order
  differential equation
  \begin{equation}
    \label{eq:sode-nesterov}
    \ddot x + \nu(t)\dot x + \eta(t)P(x) 
    = \varepsilon
    \ddt\Big[\eta(t)P(x) \Big] \,,
  \end{equation}
  where \(\nu,\eta\colon\bbR_+\to\bbR\) are continuous time-dependent real
  valued functions and where \(\varepsilon\in\bbR\) is a constant.
  If \(P\) is conservative, that is, if \(P=\nabla f\), then
  \eqref{eq:sode-nesterov} corresponds to the equation of motion of the (forced)
  time-dependent Lagrangian system:
  \begin{subequations}
    \label{eq:csys}
  \begin{align}
    \label{eq:csys-lagrangian}
    L(x,\dot x,t) ={}& a(t)\tfrac12\|\dot x\|^2-b(t)f(x) \,,\\
    \label{eq:csys-force}
    F(x,\dot x,t) ={}& \varepsilon a(t)
                       \ddt\left[ \frac{b(t)}{a(t)}P(x) \right] \,,
  \end{align}
  \end{subequations}
  in which \(f\) is the field's potential and where
  \begin{equation}
    \label{eq:coeffs-var2lag}
     a(t) = \exp(\textstyle\int_0^t\nu(s)\:\!\ds)\,, \quand b(t) = a(t)\eta(t) \,,
  \end{equation}
  for \(t\geq0\).
\end{lemma}

\begin{proof}
  Assume \(P\) is conservative and let \(f\) denote it's potential. Then the
  Euler-Lagrange equation for \eqref{eq:csys} is
  \begin{equation}
    \label{eq:EL-forces}
    a(t)\ddot x+a'(t)\dot x + b(t)P(x)
    = \varepsilon a(t)
    \ddt\left[ \frac{b(t)}{a(t)}P(x) \right] \,.
  \end{equation}
  Dividing by \(a(t)\), and taking into account that, from
  \eqref{eq:coeffs-var2lag} we have that
  \begin{equation}
    \label{eq:coeffs-lag2var}
    \nu(t) = \frac{a'(t)}{a(t)} \quand \eta(t) = \frac{b(t)}{a(t)}\,,
  \end{equation}
  we obtain \eqref{eq:sode-nesterov}.
\end{proof}

\begin{remark}
  \(P\) being conservative is not a necessary condition for
  \eqref{eq:sode-nesterov} to be derived from the Lagrange-d'Alembert principle.
  In order to be variational, Equation \eqref{eq:EL-forces} requires a
  Lagrangian of the form
  \[ L(x,\dot x,t) = a(t)\tfrac12\dot x^2 + \langle c(x,t), \dot x\rangle + d(x,t) \]
  for some unknown functions \(c,d\), which implies
  \[ \pp[d]x = \pp[c]t + b(t)P(x) \,. \]
  A vector field \(P\) satisfying this last relation need not be conservative.
\end{remark}

Next, the result that links the previous continuous equation
\eqref{eq:sode-nesterov} with PHB/CM and NAG, showing, in particular, that NAG is a
forced version of PHB/CM, between which the transition is immediate.

\begin{theorem}
  \label{thm:cm-vs-nag}
  Given a real valued function \(f\colon\bbR^n\to\bbR\) and a vector field
  \(P\colon\bbR^n\to\bbR^n\), consider the time-dependent discrete Lagrangian
  and forces
  \begin{subequations}
    \label{eq:dsys}
  \begin{align}
    \label{eq:dsys-lagrangian}
    L^k_d(z_0,z_1) ={}& a_k\tfrac12\left\|z_1-z_0\right\|^2-b_k^-f(z_0)-b_{k+1}^+f(z_1) \,,\\
    \label{eq:dsys-forcem}
    (F_d^k)^-(z_0,z_1) ={} & {-\tfrac{a_{k-1}}{a_k}}(b_k^-+b_k^+)P(z_0)\,,\ \text{and}\\
    \label{eq:dsys-forcep}
    (F_d^k)^+(z_0,z_1) ={} & \phantom{-\tfrac{a_{k-1}}{a_k}}(b_k^-+b_k^+)P(z_0)\,.
  \end{align}
  \end{subequations}
  where \(\{a_k\}_{k\geq0}\), \(\{b_k^-\}_{k\geq0}\), \(\{b_k^+\}_{k\geq0}\),
  are arbitrary sequences of real numbers.
  If \(f\) is regular enough, so that \(P=\nabla f\), and \(a_k\) is never null,
  then the free and forced equations of motion for \(L^k_d\) and
  \((L^k_d,(F_d^k)^-,(F_d^k)^+)\) are, respectively, equivalent to the following recursive schemes
  \begin{subequations}
    \label{eq:cmnag}
  \begin{align}
    \label{eq:cmnag:tail}
    y_{k+1} ={}& x_k - \eta_kP(x_k)&
    \bar y_{k+1} ={}& \bar x_k - \eta_kP(\bar x_k)\\
    \label{eq:cmnag:head}
    x_{k+1} ={}& y_{k+1} + \mu_k(x_k-x_{k-1})&
    \bar x_{k+1} ={}& \bar y_{k+1} + \mu_k(\bar y_{k+1}-\bar y_k)
  \end{align}
  \end{subequations}
  where
  \begin{equation}
    \label{eq:coeffs-lag2int}
    \mu_{k+1}=\frac{a_k}{a_{k+1}} \quand
    \eta_k=\frac{b_k^-+b_k^+}{a_k} \,,
  \end{equation}
  for \(k\geq0\).

  Conversely, given a vector field \(P\colon\bbR^n\to\bbR^n\) and two arbitrary
  sequences of real numbers \(\{\mu_{k+1}\}_{k\geq0}\) and
  \(\{\eta_k\}_{k\geq0}\), consider the sequences of pairs of points given in
  equations \eqref{eq:cmnag:tail}-\,\eqref{eq:cmnag:head}.
  If \(P\) is conservative and \(\mu_{k+1}\) is never null, then both schemes
  are variational. Moreover, they are equivalent to the equations of motions for
  the free and forced time-dependent discrete Lagrangian systems given in
  \eqref{eq:dsys-lagrangian}-\,\eqref{eq:dsys-forcep}, for which \(f\) is the
  field's potential and
  \begin{equation}
    \label{eq:coeffs-int2lag}
    a_0 = 1,\
    a_{k+1} = a_k/\mu_{k+1},\ \forall k\geq0\,,\qquad
    b_k^\pm = \tfrac12 a_k\eta_k,\ \forall k\geq0\,.
  \end{equation}
\end{theorem}

\begin{proof}
  Partial differentiation of the Lagrangian gives
  \begin{align*}
    D_1L^k_d(z_0,z_1) ={}& -a_k\Delta z_0-b_k^-\nabla f(z_0)\\
    D_2L^k_d(z_0,z_1) ={}& \phantom{{}-{}}a_k\Delta z_0-b_{k+1}^+\nabla f(z_1)
  \end{align*}
  from where it is readily seen that the DEL equations with forces (\ref{eq:forcedsystem}) are
  \begin{multline*}
    -a_{k+1}\Delta z_1 + a_k\Delta z_0 - (b_{k+1}^-+b_{k+1}^+)\nabla f(z_1) =\\
    = \frac{a_k}{a_{k+1}}(b_{k+1}^-+b_{k+1}^+)\nabla f(z_1)
                         - (b_k^- + b_k^+)\nabla f(z_0)\,,
  \end{multline*}
  where the RHS is null for the non-forced DEL equations. Dividing by
  \(-a_{k+1}\) and using the relations \eqref{eq:coeffs-lag2int}, we get
  \begin{equation}
    \label{eq:DEL.simplified}
    \Delta z_1  -\mu_{k+1}\Delta z_0 + \eta_{k+1}\nabla f(z_1)
    = -\mu_{k+1}\left(\eta_{k+1}\nabla f(z_1)-\eta_k\nabla f(z_0)\right)\,,
  \end{equation}
  where again the right hand side is null for the non-forced case. Replacing
  \(z_i\) by \(x_{k+i}\), in the non-forced case, and by \(\bar x_{k+i}\), in
  the forced one, taking into account that \(P=\nabla f\), and using the
  equations in \eqref{eq:cmnag:tail}, we obtain those in \eqref{eq:cmnag:head}.

  The converse is immediate.
\end{proof}

Now remarks are in order that will summarize some points that have been
mentioned earlier and underline others that haven't been yet.

\begin{remark}[One-to-one correspondence]
  \label{rmk:one-to-one-and-onto}
  Note that both equations in \eqref{eq:cmnag:tail} are formally the same,
  whereas in \eqref{eq:cmnag:head} there is a difference in the last term: While
  PHB/CM uses \(x\)'s, NAG considers \(y\)'s. This is a slight change that
  nonetheless defines different schemes and in which the forcing term is hidden.
  This result not only shows that NAG is a forced version of PHB/CM, but that the
  schemes are in a natural bijective correspondence.
\end{remark}

\begin{remark}[Initial conditions]
  \label{rmk:initial.conditions}
  Usually the initial condition, \((x_0,v_0)\) or \((x_0,p_0)\) in phase space,
  or \((x_0,x_1)\) in configuration space, is crucial for the proper simulation
  of the dynamical system. Here however the dynamics are a tool and generally an
  initial condition so that \(x_1=x_0\) or \(\bar y_0=\bar x_0\), where
  \(x_0=\bar x_0\) is close to the minimum, will suffice, which corresponds to
  stick the ball to the bowl's wall and leave it roll.
\end{remark}

\begin{remark}[Natural trajectory]
  \label{rmk:trajectories}
  From the schemes' definitions, the sequences \(\{x_k\}_{k=0}^\infty\) and \(\{\bar x_k\}_{k=0}^\infty\) are the natural (on track) dynamical trajectories towards the minimum of \(f\), while \(\{y_k\}_{k=1}^\infty\) and \(\{\bar y_k\}_{k=1}^\infty\) are off road marks that limit these trajectories like slalom flags. The latter are however asymptotically close to the former and, hence, to the minimum.
\end{remark}

\begin{remark}[Discrete flow]
  \label{rmk:discrete.flow}
  If one wants to compare the trajectories obtained from both methods, perhaps Equations \eqref{eq:cmnag} are better suited. If one is solely interested in a simple implementation to compute the minimum, then Equations \eqref{eq:cm:single} and \eqref{eq:nag:yang} are a good alternative since they can easily be rewritten to give discrete flow updates in the form ``momentum first, then position'' as in \cite{pmlr-v28-sutskever13}:
 \begin{subequations}
  \label{eq:flow.updates}
    \begin{align}
      \label{eq:flow.updates:momemtum}
      \Delta x_k ={}& \mu_{k-1}\Delta x_{k-1} - \eta_k\nabla f(x_k)&
      \Delta\bar y_k ={}& \mu_{k-1}\Delta\bar y_{k-1} - \eta_k\nabla f(\bar y_k+\mu_{k-1}\Delta\bar y_{k-1})\\
      \label{eq:flow.updates:position}
      x_{k+1} ={}& x_k + \Delta x_k&
      \bar y_{k+1} ={}& \bar y_k + \Delta \bar y_k
    \end{align}
  \end{subequations}
  where both methods should be initialized with \(x_0=\bar y_0\) and \(\Delta x_{-1}=\Delta \bar y_{-1}=0\). Both approaches, Equations \eqref{eq:cmnag} and Equations \eqref{eq:flow.updates}, have been considered for the simulations of Section \ref{sec:simulations}.
\end{remark}

\begin{remark}[Force approximation]
  \label{rmk:force.approximation}
  The action induced by the discrete forces in \eqref{eq:dsys} is a second
  order approximation to the action induced by the continuous force
  \eqref{eq:csys-force}. Indeed, given continuous
  coefficients \(a(t),b(t)\), define \(a_k=a(kh)/h^2\) and \(b^\pm_k\) so that
  \(b^-_k+b^+_k=b(kh)\), and similarly for a continuous path \(x(t)\) and a variation \(\delta x(t)\) of it. Then
  \begin{multline*}
    h\sum_{k=0}^{N-1}F_d^k(x_k,x_{k+1})\cdot(\delta x_k,\delta x_{k+1}) =\\
    \begin{aligned}
      ={}& -h\sum_{k=1}^{N-1}\frac{a_{k-1}}{a_k}(b_k^-+b_k^+)\nabla f(x_k)\cdot\delta x_k+h\sum_{k=1}^{N-1}(b_{k-1}^-+b_{k-1}^+)\nabla f(x_{k-1})\cdot\delta x_k\\
      ={}& \sum_{k=1}^{N-1}\int_{t_k-\frac{h}2}^{t_k+\frac{h}2}\left(-\frac{a(t-h)}{a(t)}b(t)\nabla f(x(t))+b(t-h)\nabla f(x(t-h))\right)\cdot \delta x(t)\,\dt+\calO(h^2)\\
      ={}& \int_{\frac{h}2}^{T-\frac{h}2}-h\,a(t-h)\ddt\left[ \frac{b(t)}{a(t)}\nabla f(x(t)) \right]\cdot \delta x(t)\,\dt+\calO(h^2)\\
      ={}& \int_0^T-h\,a(t)\ddt\left[ \frac{b(t)}{a(t)}\nabla f(x(t)) \right]\cdot \delta x(t)\,\dt+\calO(h^2)
    \end{aligned}
  \end{multline*}
  where we have considered a mid point quadrature rule to establish the second
  equality, the rest follows.
\end{remark}

\begin{remark}[Force evanescence]
  \label{rmk:force.evanescence}
  As remarked, NAG can be viewed as an approximation to a forced continuous
  Lagrangian system, where the force is proportional to the time step that is
  used \emph{a posteriori} for the discretization. Although its contribution is
  non-null along the whole trajectory, it however vanishes not only locally,
  when \(h\) decreases, but also asymptotically in time, when \(t\) or \(k\)
  increase.
\end{remark}

\section{Simulations}
\label{sec:simulations}

Numerical experiments are performed considering different elements, namely:
\begin{itemize}
\item The time-dependent coefficients that appear in the Lagrangian, \(a(t)\)
  and \(b(t)\), for the simple case \eqref{eq:bregman.lagrangian:euclidean};
  \item The discretization scheme used to approximate the Lagrangian action; and obviously,
\item The objective function to be minimized, \(f(x)\).
\end{itemize}

\subsection{Lagrangian Coefficients}

We consider three different Lagrangians or, more precisely, three different pairs of time-dependent coefficients \((a(t),b(t))\) given, as in \eqref{eq:bregman.lagrangian:euclidean}, by time-dependent exponents triples \((\alpha(t),\beta(t),\gamma(t))\) satisfying the ideal scaling conditions \eqref{eq:ideal-conds}, which ensure that \(\argmin f\) is an attractor of the underlying dynamical system.

\subsubsection{Potential Dilation}

The first Lagrangian under consideration is a potential dilation of a mechanical Lagrangian, namely,
\begin{equation}
  \label{eq:clag-nesterov}
  L(x,v,t)
  = t^n\left( \frac12\|v\|^2 - f(x) \right) \,,
\end{equation}
whose Euler-Lagrange equation is
\begin{equation}
  \label{eq:EL-nesterov}
  \ddot x + \frac{n}t\dot x + \nabla f(x) = 0 \,,
\end{equation}
which is the equation considered in \cite{SuBoCa16}.

A naive choice of exponents from which to obtain the time-dependent coefficients \(a(t)=b(t)=t^n\) is
\begin{equation}
  \label{eq:exps-nesterov-nonideal}
  \alpha(t) = 0,\
  \beta(t)  = 0,\
  \gamma(t) = n\log t.
\end{equation}
However, they do not satisfy the ideal scaling conditions. A triple that does met this requirement is
\begin{equation}
  \alpha(t) = \log \powerp - \log t,\
  \beta(t)  = 2(\log t - \log \powerp),\
  \gamma(t) = \powerp\log t + \log \powerp,
\end{equation}
where \(\powerp=n-1\), but only when \(n\geq3\), thus, showing the ``magic'' \(n=3\) in \eqref{eq:EL-nesterov} of NAG \citep{SuBoCa16}. It is in fact the only choice satisfying the scaling conditions and gives an optimal rate of convergence non-slower than \(\calO(\sfrac1{t^2})\).

\subsubsection{Modified Potential Dilation}

The Lagrangian
\begin{equation}
  \label{eq:clag-wiwijo}
  L(x,v,t)
  = t^n\left( \frac12\|v\|^2 - Dt^{n-3}f(x)\right) \,,
\end{equation}
whose Euler-Lagrange equation is
\begin{gather*}
  \ddot x + \frac{n}t\dot x + Dt^{n-3}\nabla f(x) = 0 \,,
\end{gather*}
is the Lagrangian considered by \citet{WiWiJo16} for the metric case
and corresponds to the exponents
\begin{equation}
  \alpha(t) = \log \powerp - \log t,\
  \beta(t)  = \powerp\log t + \log C,\
  \gamma(t) = \powerp\log t + \log \powerp,
\end{equation}
where \(C=D/\powerp^2\) and \(\powerp=n-1\). They satisfy the ideal scaling conditions for \(n>1\), giving a rate of convergence non-slower than \(\calO(\sfrac1{t^{n-1}})\) \citep[confer with][, in particular for specific details on the constant \(C\)]{WiWiJo16}.

\subsubsection{Exponential Dilation}
Finally, the exponentially dilated Lagrangian
\begin{equation}
  \label{eq:clag-constant}
  L(x,v,t)
  = e^{\lambda t}\left( \frac12\|v\|^2 - f(x) \right) \,,
\end{equation}
whose equation of motion is precisely the one of a mechanical system with linear damping,
\begin{equation}
  \label{eq:EL-constant}
  \ddot x + \lambda\dot x + \nabla f(x) = 0 \,,
\end{equation}
corresponds to the choice exponents
\begin{equation}
  \label{eq:ideal_exponents:exp_dilation}
  \alpha(t) = \log\lambda,\
  \beta(t)  = -2\log\lambda,\
  \gamma(t) = \lambda t+\log\lambda.
\end{equation}
We note here three points. First, it is the unique choice for \(a(t)=b(t)=e^{\lambda t}\) that satisfies the ideal scaling conditions. Second, in principle this choice gives a theoretical convergence rate of \(\calO(1)\), fortunately it can be reduced down to \(o(1)\) \citep[, Th. 2.5]{Attouch21}. Third and more notably, although the Lagrangian \eqref{eq:clag-constant} is explicitly time-dependent, the Euler-Lagrange equation \eqref{eq:EL-constant} is autonomous and it introduces a linear time-independent damping term in the equation.

\subsection{Discretizations}
Using the trapezoidal rule to approximate the action of the above Lagrangians, we retrieve common NAG coefficients that appear in the literature. With some abuse of notation, we write in general
\[
  a(k) \coloneqq \frac{a(t_k)+a(t_{k+1})}{2h^2}\,,
  \qquand
  b^\pm(k) \coloneqq \frac{b(t_k)}2\,,
\]
where \(t_k=kh\).

\subsubsection{Bounded Coefficients from the Potential Dilation}

For the continuous time-dependent coefficients \(a(t)=b(t)=t^n\), with
\(n=\powerp+1\), the trapezoidal rule yields the discrete time-dependent coefficients
\[
  a(k) = \frac{t_k^n+t_{k+1}^n}{2h^2},\quad
  b^\pm(k) = \frac{t_k^n}2,
\]
from which we obtain the coefficients
\begin{equation}
  \label{eq:coeffs:bounded}
  \mu(k) = \frac{k^n + (k-1)^n}{k^n + (k+1)^n},\quad
  \eta(k) = \frac{2k^n}{k^n + (k+1)^n}h^2.
\end{equation}
Both coefficients are strictly increasing and bounded above by \(1\) and \(h^2\),
respectively. To avoid integer overflow and slightly reduce computational cost in
final implementations, these expressions can be simplified to
\begin{equation*}
  \mu(k) = \frac{2k-n}{2k+n} + o(1),\quad
  \eta(k) = \left(\frac{2k}{2k+n} + o(1)\right)h^2.
\end{equation*}
For the particular case \(n=3\), that is \(\powerp=2\), we get \(\mu(k+\sfrac32) =
\frac{k}{k+3} + \calO(\sfrac1{k^3}) \) as in \eqref{eq:nag:magic}.

\subsubsection{Unbounded Coefficients from the Modified Potential Dilation}

In this case, the continuous time-dependent coefficients are
\(a(t)=t^n\) and \(b(t)=Dt^{2n-3}\), as in \cite{WiWiJo16}, which yield
\begin{equation}
  \label{eq:coeffs:unbounded}
  \mu(k) = \frac{k^n + (k-1)^n}{k^n + (k+1)^n},\quad
  \eta(k) = D\frac{2k^n}{k^n + (k+1)^n}t_k^{n-3}h^2.
\end{equation}
As earlier,
\begin{equation*}
  \mu(k) = \frac{2k-n}{2k+n} + o(1),\quad
  \eta(k) = D\left(\frac{2k}{2k+n} + o(1)\right)t_k^{n-3}h^2.
\end{equation*}
This time, \(\eta\) is unbounded when \(n\geq4\), and bounded above by \(Dh^2\)
when \(n=3\).

Similarly, \citet{BJW} suggest a palindromic split Hamiltonian method with 7
stages that can be recovered from the proposed perspective considering the
discrete Lagrangian
\[ L^k_d(x_k,x_{k+1}) = t_{k+\sfrac12}^n\left(
    \frac12\left\|\frac{x_{k+1}-x_k}h\right\|^2 -
    Dt_{k+\sfrac12}^{n-3}\frac{f(x_k)+f(x_{k+1})}2 \right) \]
Note that it is not obtained by a trapezoidal approximation of the Lagrangian
\eqref{eq:clag-wiwijo} but it is still a discretization of it for which
\begin{equation*}
  \mu(k) = \left(\frac{2k-1}{2k+1}\right)^n,\quad
  \eta(k) = D\frac{(2k+1)^{2n-3} + (2k-1)^{2n-3}}{2(2k+1)^{2n-3}} t_{k+\sfrac12}^{n-3}h^2.
\end{equation*}
As before,
\[ \eta(k) = D\left(\frac{2k}{2k+2n-3}+o(1)\right)t_{k+\sfrac12}^{n-3}h^2 \,. \]

\subsubsection{Constant Coefficients from the Exponential Dilation}

Taking \(a(t)=b(t)=e^{\lambda t}\) yields
\begin{equation}
  \label{eq:coeffs:constant}
  \mu(k) = \frac{1 + e^{-\lambda h}}{1 + e^{+\lambda h}},\quad
  \eta(k) = \frac2{1 + e^{\lambda h}}h^2.
\end{equation}
For \(\lambda=1\) and \(h=0.1024\), \(\mu\approx0.9\) and \(\eta\approx0.01\),
values that often appear in the literature, as in \citet{pmlr-v28-sutskever13}. In general, any pair of constant coefficients \(\mu,\eta>0\) can be obtained from values \(\lambda\in\bbR,h>0\).

\subsubsection{The Actual Method by \texorpdfstring{\citeauthor*{WiWiJo16}}{Wibisono, Wilson, and Jordan}}

Strictly speaking, the method by \citet{WiWiJo16} is based on NAG, but it
differs from it. Although they consider the previous Lagrangian
\eqref{eq:clag-wiwijo} and experiment with it in \citet{BJW,Jo18}, what is
proposed in \citet{WiWiJo16} is the 3-phase scheme
\begin{subequations}
  \label{eq:WiWiJo16}
\begin{align}
  x_{k+1} ={}& \frac{\powerp}{k+\powerp}z_k + \frac{k}{k+\powerp} y_k\\
  \label{eq:argminyp}
  y_k ={}& \argmin_y\left\{ f_{\powerp-1}(y;x_k) +\frac{N}{\powerp h^\powerp}\|y-x_k\|^\powerp\right\}\\
  \label{eq:argminzp}
  z_k ={}& z_{k-1} - D\frac{k}{\powerp}t_k^{\powerp-2}h^2\nabla f(y_k)
\end{align}
\end{subequations}
with \(z_0=y_0=x_0\) and where \(\powerp=n-1\), \(f_{\powerp-1}(y;x_k)\) is the \((\powerp-1)\)-th
Taylor expansion of \(f\) about \(x_k\) and \(N\) is a constant related to \(D\)
and \(\powerp\) that ensures convergence. Note that for \(n=3\) (\(\powerp=2\)),
the optimization problem \eqref{eq:argminyp} is explicit and reduces to
\begin{equation}
  \label{eq:argminy2}
  y_k = x_k - \frac1{N}h^2\nabla f(x_k)
\end{equation}
but it is implicit in general, what increases the cost of the method, aside of having to compute the Hessian and higher derivatives of \(f\), either explicitly or by autodifferentiation.

In section \eqref{sec:experiments}, we will refer to method \eqref{eq:WiWiJo16} by WWJ, after the authors, and consider it as a (modified) NAG method.

\subsection{Objective Functions}
\label{sec:simulations:objective_functions}

Several objective functions are considered: a highly dimensional quadratic
function with tridiagonal matrix representation, a generalized Rosenbrock
function, yet another test function for momentum-descent methods and one that
combines a generalized logistic function with a mean loss and that is often
used in Neural Networks.

\subsubsection{Highly Dimensional Quadratic Function}

In \citep{BJW}, they consider the quadratic map on \(\bbR^n\)
\begin{equation}
  \label{eq:objfun:quad}
  f(x) = \tfrac12\left\langle x,\Sigma^{-1}x  \right\rangle \,,
\end{equation}
where \(\Sigma\) is the matrix whose elements are \(\Sigma_{ij}=\rho^{|i-j|}\), with \(\rho=0.9\) and \(n=50\), and whose
inverse \(\Sigma^{-1}\) is the tridiagonal matrix
\[
  \Sigma^{-1} = \frac1{1-\rho^2}
  \begin{pmatrix}
      1   &  -\rho\\
    -\rho & 1+\rho^2 & -\rho\\
          &   \ddots & \ddots & \ddots\\
          &          & -\rho  & 1+\rho^2 & -\rho\\
          &          &        &  -\rho   &   1
  \end{pmatrix} \,.
\]

\subsubsection{Generalized Rosenbrock Function}

The Rosenbrock function \citep{Ro60}, whose expression is
\[ f(x,y) = (a-x)^2 + b(y-x^2)^2 \,, \]
with \(a,b>0\), represents a banana-shaped flat-valley surrounded by steep walls
with a unique critical point and global minimum at \(a,a^2\), whose search by
numerical means is difficult, hence its use to test and benchmark optimizers.
We consider here its generalization to higher dimensions, \(n>2\), namely
\begin{equation}
  \label{eq:objfun:rosenbrock}
  f(x) = \sum_{i=1}^{n-1}\left[ (1-x_i)^2+100(x_{i+1}-x_i^2)^2 \right] \,.
\end{equation}
As for the two-dimensional case, the function counts with a global minimum at
\((1,1,\dots,1)\) but, unlike it, also has a local minimum close to
\((-1,1,\dots,1)\) (closer the higher is the dimension).

\subsubsection{Yet Another Test Function (YATF)}

Another example that might difficult the search of a minimum is the following
\begin{equation}
  \label{eq:objfun:yatf}
  f(x,y) = \sin(2x^2-y^2+3)\cdot\cos(x+1-\exp(2y)) \,,
\end{equation}
which has a local minimum close to \((0.32,1.60)\).

\subsubsection{Multinomial Logistic Regression}

In artificial neural networks (ANN), the activation function of a node (or neuron) defines the output of that node given an input (or set of inputs). Supervised learning is a learning paradigm of the training process of the ANN. Different choices are available for the activation function and the training process, which is a subject that might be in some cases controversial within the ANN community, but that is not the object of this work.
We consider a shallow neural network with a single layer for classification in \(n\) classes or targets given \(m\) features.
Upon reaching the neurons, the inputs (features) are weighted and possibly biased, what in fact is the model to be determined through the learning process.
\[
w\in\bbR^{m\times n},\ b\in\bbR^n\colon\
x\in\bbR^m \mapsto w\cdot x + b \in\bbR^n
\]
The chosen activation function is a classifier, a generalization of the logistic function, the multinomial logistic function (a.k.a. softmax),
\[ \sigma(z) = \left(\frac{e^{z_j}}{\sum_k e^{z_k}}\right) \,. \]
As loss function, we choose the cross-entropy or log-loss,
\[ \logloss(\hat y, y) = -\left\langle y, \log \hat y\right\rangle \,, \]
where \(\hat y\in\bbR^n\) is the computed output, \(y\in\{0,1\}^n\) with \(\sum_j y_j=1\) is the expected output (class), and \(\log\) is applied componentwise.
We take as objective function the average loss over a training dataset \(\calD\) of length \(|\calD|\), namely,
\begin{equation}
  \label{eq:objfun:logloss}
  f(w,b) = -\frac1{|\calD|}\sum_{(x,y)\in\calD}\left\langle y,(\log\circ\,\sigma)(w\cdot x+b)\right\rangle \,.
\end{equation}

It is important to note here that \(f\) is the sum of convex functions and therefore itself convex, however \(f\) need not have a global minimum but an asymptotic infimum: \(0\) as, for instance, \(\exp(t)\) opposed to \(\cosh(t)\). This depends on the data fed for the training process.

\subsection{Experiments}
\label{sec:experiments}

Many experiments have been performed, we optimized each test function using the aforementioned methods (and others) set with different parameters and initial guesses, from which we present here a small but suggestive sample. In summary, we minimize the test functions \eqref{eq:objfun:rosenbrock} and \eqref{eq:objfun:yatf}, the quadratic function \eqref{eq:objfun:quad}, and the loss \eqref{eq:objfun:logloss} with the PHB/CM and NAG methods \eqref{eq:cmnag} given by the constant, bounded and unbounded coefficients \eqref{eq:coeffs:constant},  \eqref{eq:coeffs:bounded}, and \eqref{eq:coeffs:unbounded}, respectively, and WWJ's method \eqref{eq:WiWiJo16}, the latter three for \(n=3,4\), for a total of seven methods. Each objective is minimized using its own initial guess, time-step, and number of epochs, which are fixed for the seven simulations.

We set Rosenbrock's test function with 30 dimensions and seek for its global minimum at \((1,\dots,1)\) from \((0,\dots,0)\) at a pace of \(h=0.01\) for \(20000\) epochs.
In the case of the YATF, there is a local minimum near \((0.32,1.60)\) which we seek from \((-0.25,0.35)\) with time-step \(h=0.01\) for \(3800\) epochs.
A random point on a sphere of radius \(50\) is the initial guess for the 50-dimensional quadratic function, whose global minimum, sought for \(10000\) epochs with \(h=0.1\), is clearly at the origin.
For the convergence tests, the log-loss function is fed with \(10\) arbitrary samples, with \(4\) features and \(3\) targets each, what defines an optimization problem of dimension \(10\) with no global minimum, hence we will seek for the infimum for \(12000\) epochs at a pace of \(h\approx0.945\) from a random weight distribution with null biases. For an actual ANN test, the log-loss function is fed with the widely used Iris dataset \citep[; see below for more details]{UCIMLR}.

The methods have been implemented in Julia v1.8.2 \citep{bezanson2017julia}, using solely as nonnative libraries NLsolve.jl \citep{mogensen2018nlsolve} to solve the side problem \eqref{eq:argminzp}, Plots.jl \citep{breloff2021plots} and PGFPlotsX.jl for plotting, and Pluto.jl for an interactive notebook. Methods, functions and simulations are available online \citep{cmcampos.xyz,github-cmcampos-xyz}. All plots but Figs. \ref{fig:comparison_cmnag_yatf_trajectory} and \ref{fig:comparison_methods:iris} represent the objective function residual against the epoch in log-log scale.

In Fig. \ref{fig:comparison_cmnag_yatf_trajectory}, we can see how the trajectories
computed by PHB/CM and NAG for the YATF pass by its minimum about \((0.32,1.60)\).
They start at the bottom left and go upward until they ``realize'' they have
overreached the minimum and, about \((0.7,1.9)\), they back up. Although not shown
in the figure, this motion is repeated successively, but each time they back up,
they do so sooner and closer to the minimum, like a heavy ball in a bowl. The trajectories
shown in the figure correspond to the iterations 765 to 1450 of the methods.

\begin{figure}
  \centering
  Yet Another Test Function
  \includegraphics[width=0.8\textwidth]{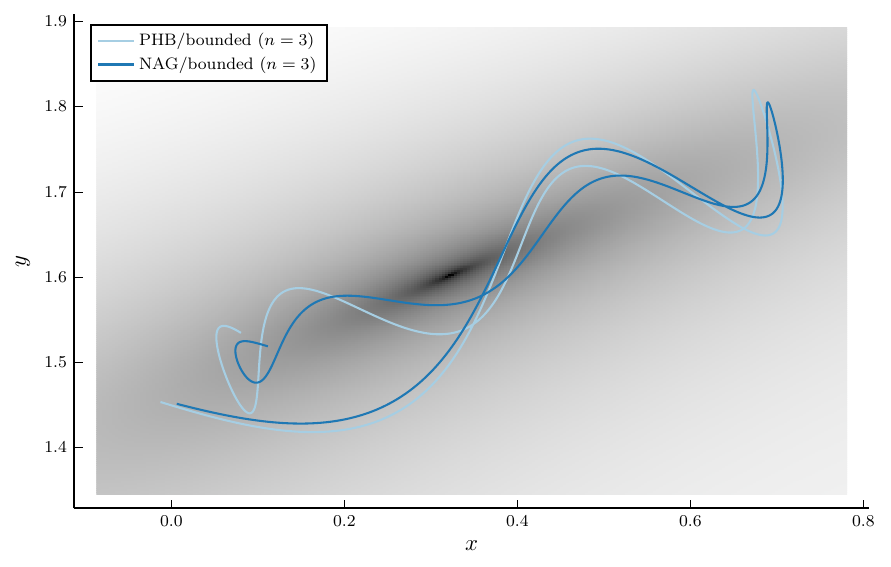}
  \caption{Trajectory slices nearby the local minimum of the YATF using PHB/CM (pale) and NAG (strong) with the bounded coefficients from the Lagrangian's polynomial dilation with \(n=3\). A nonlinear grayscale gradient indicates the minimum's location in black.}
  \label{fig:comparison_cmnag_yatf_trajectory}
\end{figure}

Similarly we observe in Fig. \ref{fig:comparison_cmnag_yatf} that NAG oscillates less than PHB/CM. Each downward peak corresponds to the trajectory passing by the minimum of the YATF. In fact, the trajectories shown in Fig. \ref{fig:comparison_cmnag_yatf_trajectory} correspond to the first peak in blue of Fig. \ref{fig:comparison_cmnag_yatf}.
These oscillations where the trajectories pass by the minimum of the objective function back and forth, and the fact that NAG oscillates less than PHB/CM slightly outperforming it are common aspects of all the simulations performed, reason why in the remaining figures we focus solely on NAG methods, where several aspects are worth to note.

\begin{figure}
  \centering
  Yet Another Test Function
  \includegraphics[width=0.8\textwidth]{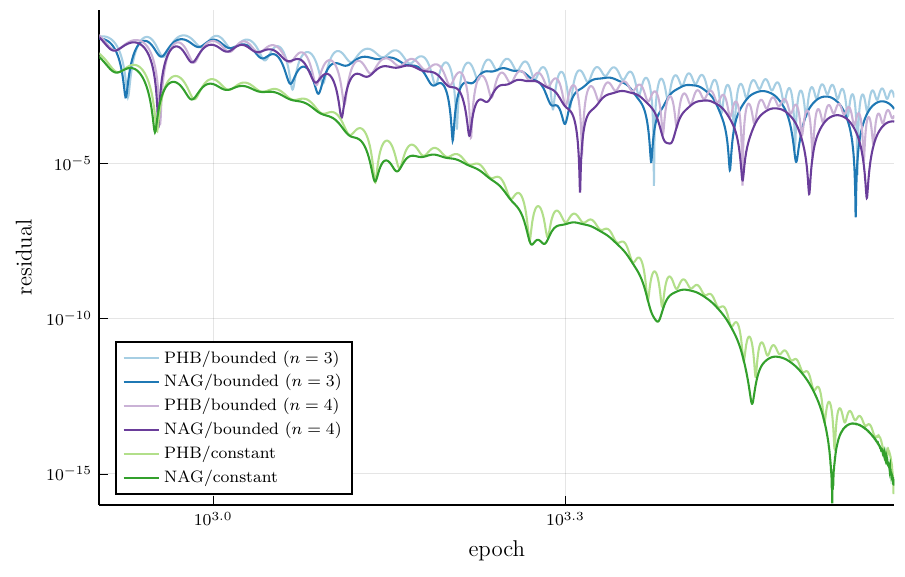}
  \caption{YATF residual values along PHB/CM (pale) and NAG (strong) trajectories for coefficients from the polynomially dilated Lagrangian with \(n=3\) (blue) and \(n=4\) (violet), and from the exponentially dilated Lagrangian (green).}
  \label{fig:comparison_cmnag_yatf}
\end{figure}

Figs. \ref{fig:comparison_methods:hdquad}-\ref{fig:comparison_methods:logloss} compare the seven methods enumerated above, one figure for each objective function presented in section \ref{sec:simulations:objective_functions}, and following the same order. Each figure is made up of two plots: the top one is composed of methods with \(n=3\), the bottom one of methods with \(n=4\), whereas both include the NAG method with constant coefficients for proper reference.

\newcommand{\mycaption}[1]{\caption{#1 values along trajectories computed with NAG for constant (green), bounded (blue), and unbounded (violet) coefficients, and
WWJ 
(red); the latter three set with \(n=3\) (top) and \(n=4\) (bottom).}}
\begin{figure}
  \centering
  Quadratic function (dimension 50)
  \includegraphics[width=1.0\textwidth]{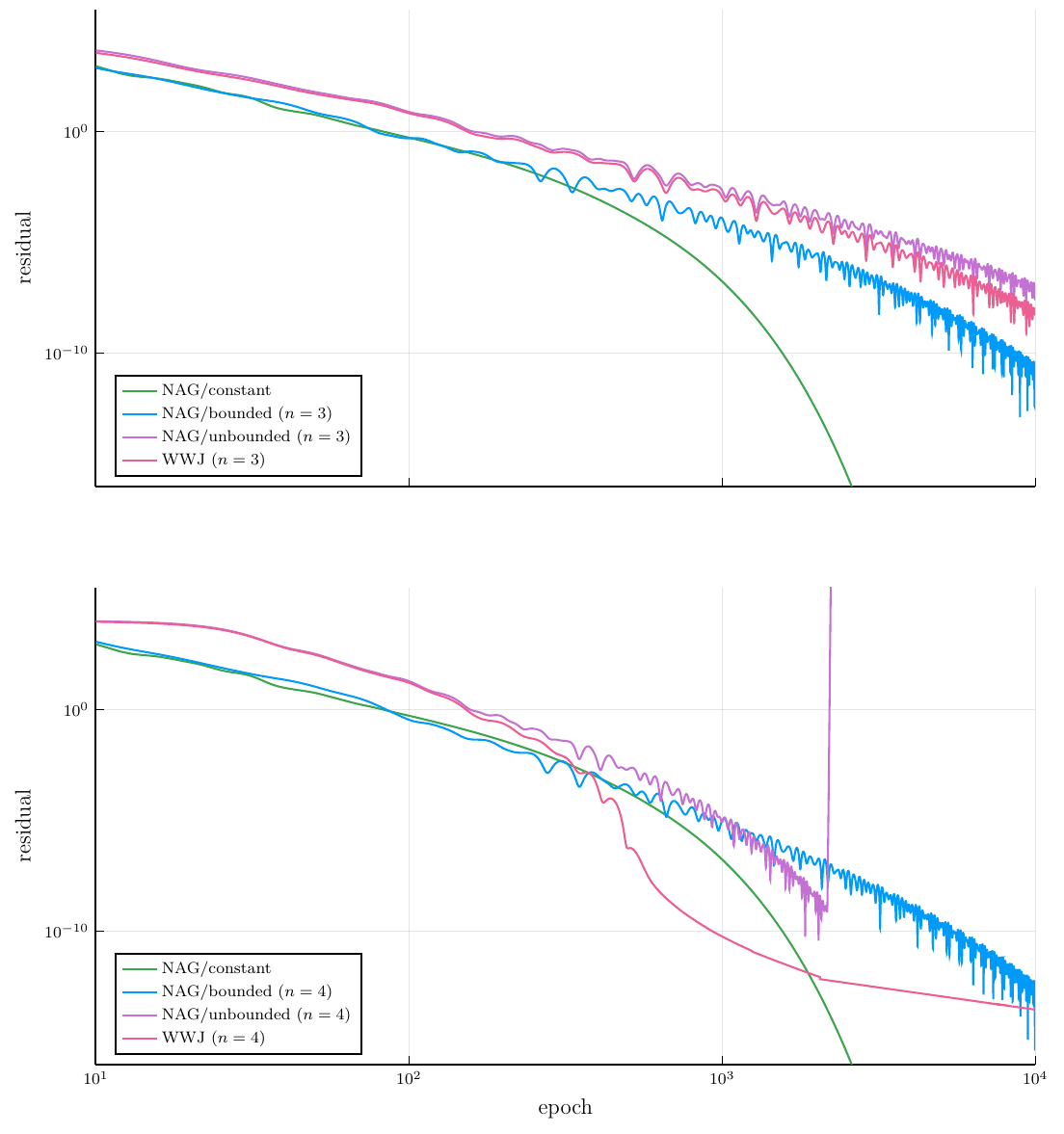}
  \mycaption{Quadratic test function}
  \label{fig:comparison_methods:hdquad}
\end{figure}
\begin{figure}
  \centering
  Rosenbrock's test function (dimension 30)
  \includegraphics[width=1.0\textwidth]{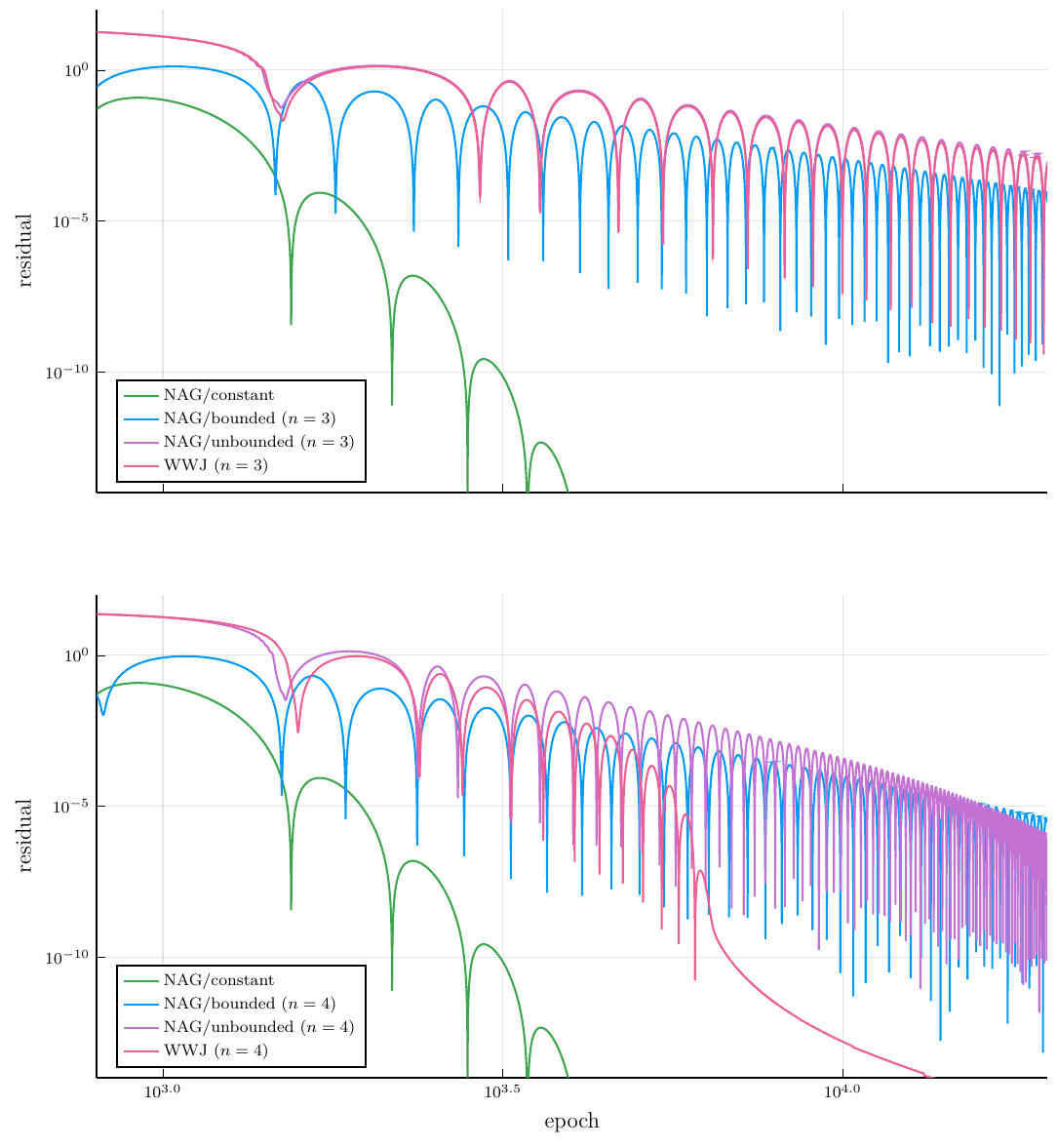}
  \mycaption{Rosenbrock's test function}
  \label{fig:comparison_methods:rosenbrock}
\end{figure}
\begin{figure}
  \centering
  Yet Another Test Function (dimension 2)
  \includegraphics[width=1.0\textwidth]{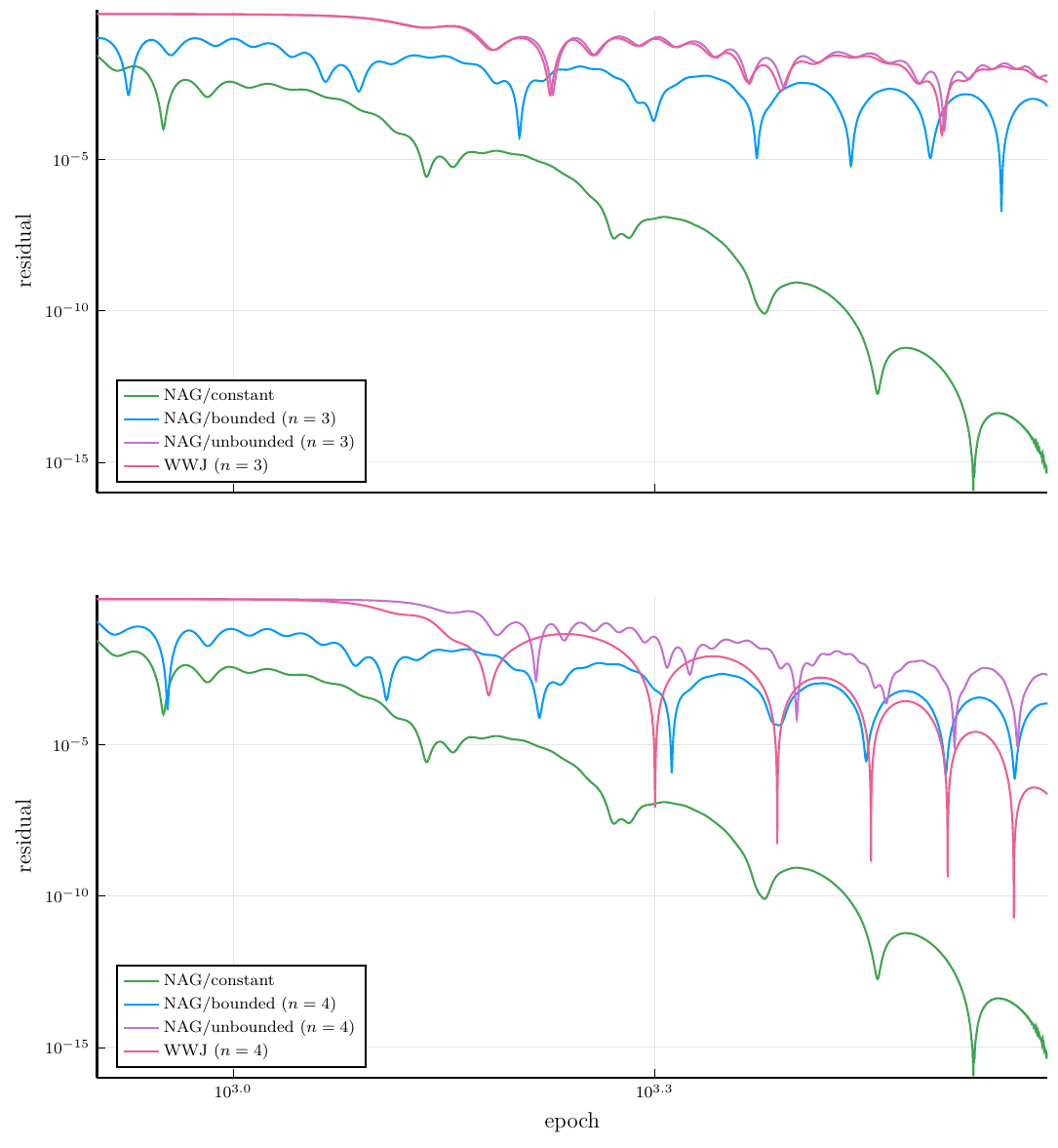}
  \mycaption{YATF residual}
  \label{fig:comparison_methods:yatf}
\end{figure}
\begin{figure}
  \centering
  Multinomial logistic loss (dimension 10)
  \includegraphics[width=1.0\textwidth]{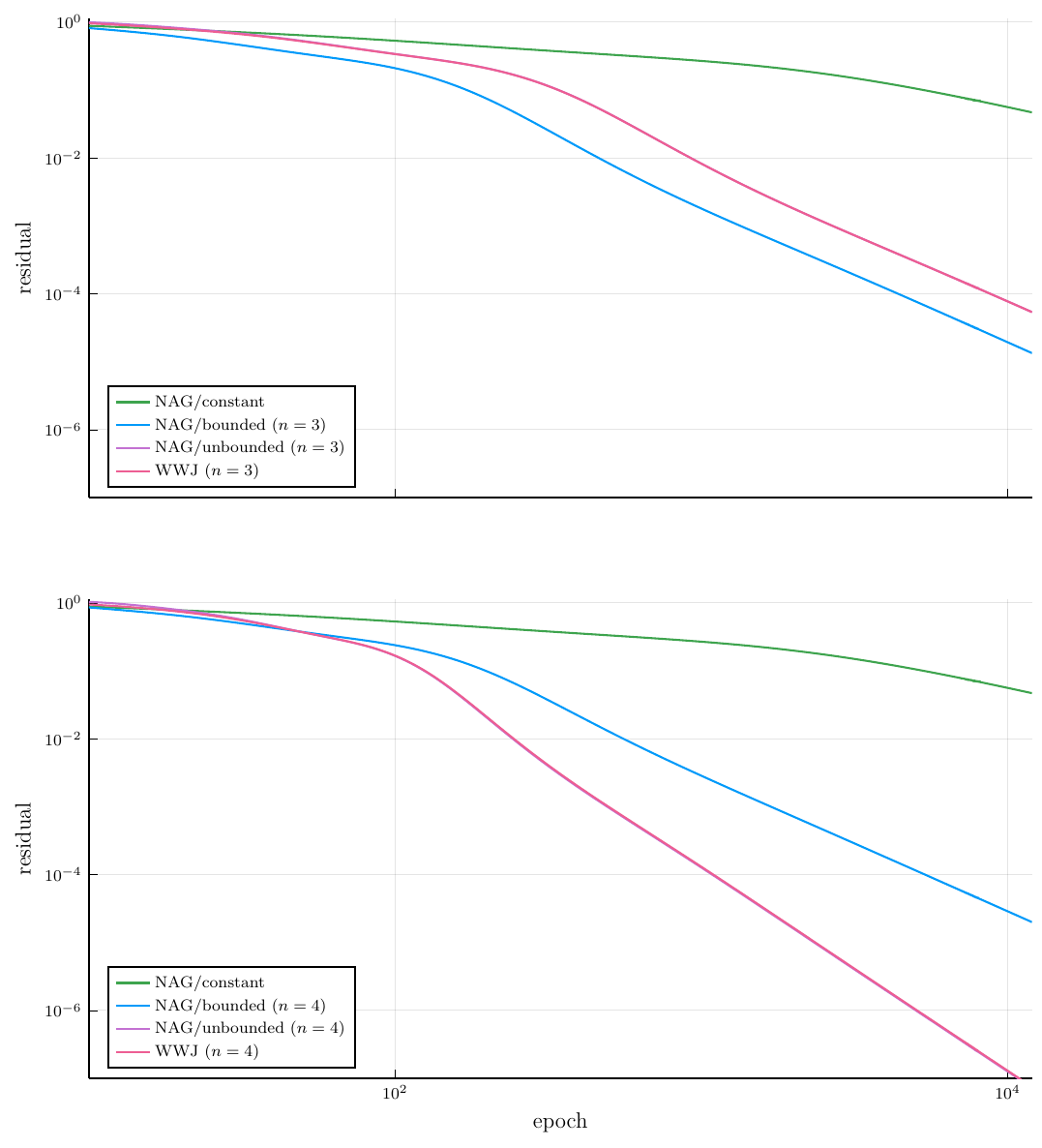}
  \mycaption{Logistic loss}
  \label{fig:comparison_methods:logloss}
\end{figure}

Observe first that, the three methods with \(n=3\) behave similarly for the four objective functions. Not surprising since, the Lagrangians considered reduce to a potentially dilated mechanical Lagrangian in which the objective function, in the cases of unbounded NAG and WWJ, is coupled with a further constant coefficient, \(D=\sfrac14\) when \(n=3\), which slightly delays convergence when compared to bounded NAG. This coefficient needs to be small in order to ensure convergence according to \cite{WiWiJo16}.

When \(n=4\), these three methods clearly show their differences. Most importantly, unbounded NAG \eqref{eq:coeffs:unbounded} blows up in Fig. \ref{fig:comparison_methods:hdquad} (highly dimensional quadratic function). This is due to the increasing and unbounded learning rate coefficient \(\eta\) that is ``ignored'' when the gradient is almost null but brakes the method when it steps at a point where the gradient is not negligible. Therefore we infer that unbounded NAG is not suited for long runs when \(n\geq4\) and should be restarted to avoid blow-up. Surprisingly WWJ is not affected by this problem even though the same coefficient appears in its definition, Eq. \eqref{eq:WiWiJo16}. The method is numerically stable thanks to the fast convergence of the trajectory towards the minimum.

In fact, WWJ really shows up when \(n=4\), where it clearly outperforms its NAG counterpart, unbounded NAG, as well as bounded NAG, but may ``stall'' when the simulation has advanced, Fig. \ref{fig:comparison_methods:hdquad}. Besides there is a trick into its performance, when \(n\geq4\) the method must solve a side optimization problem, \eqref{eq:argminzp}, which is solved here using the NLsolve.jl library, something that it somewhat redundant and suffers from the curse of dimensionality: it is around 10 times slower than the other methods for the low dimensional case (YATF) and more than 100 times slower in the high dimensional ones (Rosenbrock and quadratic function). Nonetheless, this disadvantage might decrease or even disappear when the Bregman divergence is not the simple Euclidean norm (something worth to explore), since the associated methods are unlikely to be explicit.

With regards to constant NAG, the results are somehow paradoxical. Recall that the exponentially dilated Lagrangian is the only case in which a good convergence rate is not ensured, Eq. \eqref{eq:ideal_exponents:exp_dilation}, however it is the method that in general performs the best, Figs. \ref{fig:comparison_methods:hdquad}-\ref{fig:comparison_methods:yatf}. Furthermore, although within the Machine Learning community it is perhaps the most popular method among the analysed, it is the one that has performed the worst in the purely ML scenario, Fig. \ref{fig:comparison_methods:logloss}. 

Fig. \ref{fig:comparison_methods:logloss} is where the theoretical convergence rates are better seen. In this figure, the values obtained by unbounded NAG and WWJ coalesce for both, \(n=3\) and \(n=4\).

In addition to the previous optimization problems, we apply the analyzed methods to a simple classification problem using Firsher's Iris dataset \citep{UCIMLR}. The dataset consists of 150 entries (or samples) with 7 fields: 4 features and 3 targets. Therefore we consider a shallow neural network with a single layer of 3 neurons with 4 inputs. The input data (the features) are weighted and biases upon entry into the network, the computed output is compared with the expected output (targets) using the multinomial logistic function. This process is summarized by the objective function \eqref{eq:objfun:logloss}.

For simplicity, we only consider four methods: constant, bounded, and unbounded NAG, and WWJ, the latter three with \(n=3\). We train the network (or optimize the objective function) at an increasing number of epochs (from 25 up to 250) for 1000 runs. At each run and for each number of epochs, the samples are randomly split in two: 100 samples for training and 50 samples for testing. At the same time, the initial weights of the network are drawn from a normal distribution with null mean and standard deviation \(\sigma=10\). This initial guess is kept for the four methods. Then the network is trained and the accuracy of network with the computed weights is measured (percentage of correct matches for the testing data).

Fig. \ref{fig:comparison_methods:iris} shows the average accuracy along the 1000 runs for each method \emph{versus} the number of epochs. This figure is clearly consistent with what is obtained in Fig. \ref{fig:comparison_methods:logloss}.

\begin{figure}
  \centering
  Multinomial logistic loss
  \includegraphics[width=0.8\textwidth]{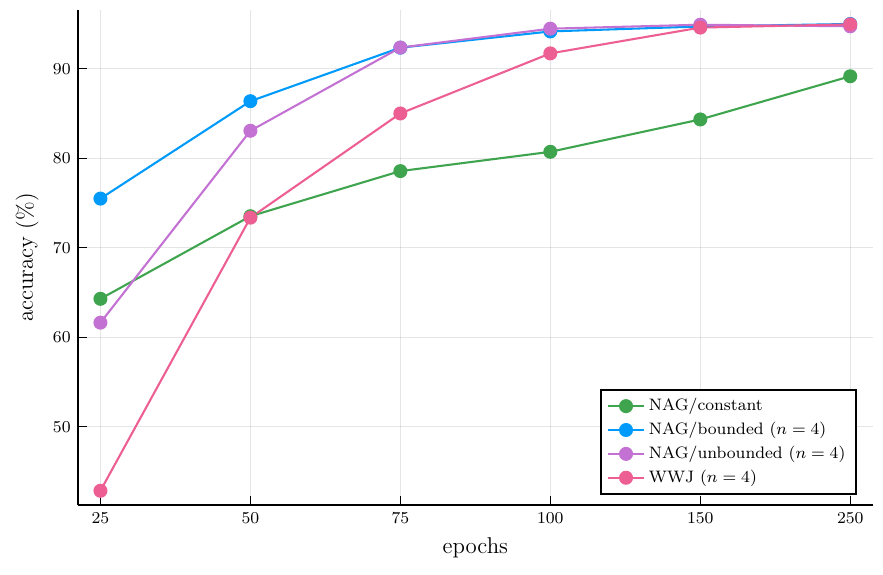}
  \caption{Average accuracies for Iris dataset achieved by NAG with constant (green), bounded (blue), and unbounded (violet) coefficients, and WWJ (red); the latter three set with \(n=4\).}
  \label{fig:comparison_methods:iris}
\end{figure}

\section{Conclusions and Future work}
In this paper, we have studied the relation between accelerated optimization and discrete variational calculus, proving a symplecticity property for the continuous differential equation in Theorem \ref{thm:wer} which is also preserved by the corresponding discrete variational methods. We have derived Classical Momentum (CM) or Polyak's Heavy Ball (PHB), and Nesterov’s Accelerated Gradient methods (NAG) from the discrete Hamiltonian and Lagrange-d'Alembert principles in Theorem \ref{thm:cm-vs-nag} adding forces in the picture and proven a one-to-one correspondence. Several simulations were performed showing the applicability of our techniques to optimization. Among all the methods, NAG with the constant coefficients from the exponentially dilated Lagrangian, aside of being the simplest choice, it also seems to be the best one for general purpose applications according to the simulations.

In a future paper, we will study that the proposed  optimization algorithm generated by using Lagrange-d'Alembert principle achieves the accelerated rate for minimizing both strongly convex functions and convex functions \citep{WiWiJo16,Shi}.
The main idea is to discretize, using discrete variational calculus, the  continuous Euler-Lagrange equations (with or without forces)  while maintaining
their convergence rates \citep[see][, for recent advances in this topic]{vaquero}.
Moreover, the extension to problems of accelerated optimization in manifolds will given using discrete variational calculus and well-know optimization techniques with retraction maps (\citealp{AbMaSeBookRetraction}; see also \citealp{LeeTaoLeok}).


\acks{D. Martín de Diego acknowledges financial support from the Spanish Ministry of Science and Innovation, under grants PID2019-106715GB-C21, from the Spanish National Research Council (CSIC), through the ``Ayuda extraordinaria a Centros de Excelencia Severo Ochoa'' R\&D  (CEX2019-000904-S). A. Mahillo would like to thank CSIC for its financial support through a JAE Intro scholarship.}

\bibliography{references}

\begin{thebibliography}{42}
\providecommand{\natexlab}[1]{#1}
\providecommand{\url}[1]{\texttt{#1}}
\expandafter\ifx\csname urlstyle\endcsname\relax
  \providecommand{\doi}[1]{doi: #1}\else
  \providecommand{\doi}{doi: \begingroup \urlstyle{rm}\Url}\fi

\bibitem[Abraham and Marsden(1978)]{Abraham1978}
Ralph Abraham and Jerrold~E. Marsden.
\newblock \emph{Foundations of Mechanics}.
\newblock {AMS Chelsea Publishing}, {Redwood City, CA}, 2 edition, 1978.

\bibitem[Absil et~al.(2008)Absil, Mahony, and Sepulchre]{AbMaSeBookRetraction}
P.-A. Absil, R.~Mahony, and R.~Sepulchre.
\newblock \emph{Optimization algorithms on matrix manifolds}.
\newblock Princeton University Press, Princeton, NJ, 2008.
\newblock ISBN 978-0-691-13298-3.
\newblock \doi{10.1515/9781400830244}.
\newblock URL \url{https://doi.org/10.1515/9781400830244}.
\newblock With a foreword by Paul Van Dooren.

\bibitem[Attouch et~al.(2021)Attouch, Chbani, and Riahi]{Attouch21}
Hedy Attouch, Zaki Chbani, and Hassan Riahi.
\newblock Fast convex optimization via time scaling of damped inertial gradient
  dynamics.
\newblock \emph{Pure Appl. Funct. Anal.}, 6\penalty0 (6):\penalty0 1081--1117,
  2021.
\newblock ISSN 2189-3756.

\bibitem[Betancourt et~al.(2018)Betancourt, Jordan, and Wilson]{BJW}
Michael Betancourt, Michael~I. Jordan, and Ashia~C. Wilson.
\newblock On symplectic optimization.
\newblock \emph{arXiv}, 1802.03653, 2018.

\bibitem[Bezanson et~al.(2017)Bezanson, Edelman, Karpinski, and
  Shah]{bezanson2017julia}
Jeff Bezanson, Alan Edelman, Stefan Karpinski, and Viral~B Shah.
\newblock Julia: A fresh approach to numerical computing.
\newblock \emph{SIAM review}, 59\penalty0 (1):\penalty0 65--98, 2017.
\newblock URL \url{https://doi.org/10.1137/141000671}.

\bibitem[Blanes and Casas(2016)]{blanes}
Sergio Blanes and Fernando Casas.
\newblock \emph{A concise introduction to geometric numerical integration}.
\newblock Monographs and Research Notes in Mathematics. CRC Press, Boca Raton,
  FL, 2016.
\newblock ISBN 978-1-4822-6342-8.

\bibitem[Br\`egman(1967)]{bregman}
L.~M. Br\`egman.
\newblock A relaxation method of finding a common point of convex sets and its
  application to the solution of problems in convex programming.
\newblock \emph{\v{Z}. Vy\v{c}isl. Mat i Mat. Fiz.}, 7:\penalty0 620--631,
  1967.
\newblock ISSN 0044-4669.

\bibitem[Breloff(2021)]{breloff2021plots}
Tom Breloff.
\newblock Plots.jl, May 2021.
\newblock URL \url{https://doi.org/10.5281/zenodo.4776893}.

\bibitem[Campos(2014)]{Campos14}
C\'{e}dric~M. Campos.
\newblock High order variational integrators: a polynomial approach.
\newblock In \emph{Advances in differential equations and applications},
  volume~4 of \emph{SEMA SIMAI Springer Ser.}, pages 249--258. Springer, Cham,
  2014.
\newblock \doi{10.1007/978-3-319-06953-1\_24}.
\newblock URL \url{https://doi.org/10.1007/978-3-319-06953-1_24}.

\bibitem[Campos and Sanz-Serna(2017)]{CamposSS17}
C\'{e}dric~M. Campos and J.~M. Sanz-Serna.
\newblock Palindromic 3-stage splitting integrators, a roadmap.
\newblock \emph{J. Comput. Phys.}, 346:\penalty0 340--355, 2017.
\newblock ISSN 0021-9991.
\newblock \doi{10.1016/j.jcp.2017.06.006}.
\newblock URL \url{https://doi.org/10.1016/j.jcp.2017.06.006}.

\bibitem[Campos(2022{\natexlab{a}})]{cmcampos.xyz}
Cédric~M. Campos.
\newblock Personal website, 2022{\natexlab{a}}.
\newblock URL \url{https://cmcampos.xyz}.

\bibitem[Campos(2022{\natexlab{b}})]{github-cmcampos-xyz}
Cédric~M. Campos.
\newblock Research repository, 2022{\natexlab{b}}.
\newblock URL \url{https://github.com/cmcampos-xyz}.

\bibitem[Cappelletti-Montano et~al.(2013)Cappelletti-Montano, De~Nicola, and
  Yudin]{Nicola}
Beniamino Cappelletti-Montano, Antonio De~Nicola, and Ivan Yudin.
\newblock A survey on cosymplectic geometry.
\newblock \emph{Rev. Math. Phys.}, 25\penalty0 (10):\penalty0 1343002, 55,
  2013.
\newblock ISSN 0129-055X.
\newblock \doi{10.1142/S0129055X13430022}.
\newblock URL \url{https://doi.org/10.1142/S0129055X13430022}.

\bibitem[Cauchy(1847)]{CauchyGD}
Agustin-Louis Cauchy.
\newblock Méthode générale pour la résolution des systèmes d'équations
  simultanées.
\newblock \emph{C. R. Acad. Sci.}, 25:\penalty0 536–--538, 1847.
\newblock URL \url{https://gallica.bnf.fr/ark:/12148/bpt6k2982c/f540.item}.

\bibitem[Celledoni et~al.(2020)Celledoni, Ehrhardt, Christian, McLachlan,
  Owren, Sch\"{o}nlieb, and Ferdia]{CellEhO}
Elena Celledoni, Matthias~J. Ehrhardt, Etmann Christian, Robert~I McLachlan,
  Brynjulf Owren, Carola-Bibiane Sch\"{o}nlieb, and Sherry Ferdia.
\newblock Structure preserving deep learning.
\newblock \emph{arXiv}, 2006.03364, 2020.
\newblock URL \url{https://arxiv.org/abs/2006.03364}.

\bibitem[{de Le{\'o}n} and R.~Rodrigues(1987)]{deLeon1987}
Manuel {de Le{\'o}n} and Paulo R.~Rodrigues.
\newblock \emph{Methods of Differential Geometry in Analytical Mechanics},
  volume 158.
\newblock {Elsevier}, {Amsterdam}, 1987.
\newblock ISBN 0-08-087269-7.

\bibitem[Dua and Graff(2017)]{UCIMLR}
Dheeru Dua and Casey Graff.
\newblock {UCI} machine learning repository, 2017.
\newblock URL \url{http://archive.ics.uci.edu/ml}.

\bibitem[Duruisseaux et~al.(2021)Duruisseaux, Schmitt, and Leok]{DSLeok}
Valentin Duruisseaux, Jeremy Schmitt, and Melvin Leok.
\newblock Adaptive {H}amiltonian variational integrators and applications to
  symplectic accelerated optimization.
\newblock \emph{SIAM J. Sci. Comput.}, 43\penalty0 (4):\penalty0 A2949--A2980,
  2021.
\newblock ISSN 1064-8275.
\newblock \doi{10.1137/20M1383835}.
\newblock URL \url{https://doi.org/10.1137/20M1383835}.

\bibitem[Hairer et~al.(2010)Hairer, Lubich, and Wanner]{hairer}
E.~Hairer, C.~Lubich, and G.~Wanner.
\newblock \emph{Geometric numerical integration}, volume~31 of \emph{Springer
  Series in Computational Mathematics}.
\newblock Springer, Heidelberg, 2010.
\newblock ISBN 978-3-642-05157-9.
\newblock Structure-preserving algorithms for ordinary differential equations,
  Reprint of the second (2006) edition.

\bibitem[Hartman(2002)]{hartman}
P.~Hartman.
\newblock \emph{Ordinary differential equations}, volume~38 of \emph{Classics
  in Applied Mathematics}.
\newblock Society for Industrial and Applied Mathematics (SIAM), Philadelphia,
  PA, 2002.
\newblock ISBN 0-89871-510-5.
\newblock \doi{10.1137/1.9780898719222}.
\newblock URL \url{http://dx.doi.org/10.1137/1.9780898719222}.
\newblock Corrected reprint of the second (1982) edition [Birkh{\"a}user,
  Boston, MA; MR0658490 (83e:34002)], With a foreword by Peter Bates.

\bibitem[Jordan(2018)]{Jo18}
Michael~I. Jordan.
\newblock Dynamical symplectic and stochastic perspectives on gradient-based
  optimization.
\newblock In \emph{Proceedings of the {I}nternational {C}ongress of
  {M}athematicians---{R}io de {J}aneiro 2018. {V}ol. {I}. {P}lenary lectures},
  pages 523--549. World Sci. Publ., Hackensack, NJ, 2018.

\bibitem[Lee et~al.(2021)Lee, Tao, and Leok]{LeeTaoLeok}
Taeyoung Lee, Molei Tao, and Melvin Leok.
\newblock Variational symplectic accelerated optimization on lie groups.
\newblock \emph{arXiv}, 2103.14166, 2021.
\newblock URL \url{https://arxiv.org/abs/2103.14166}.

\bibitem[Libermann(1959)]{Libermann}
Paulette Libermann.
\newblock Sur les automorphismes infinit\'{e}simaux des structures
  symplectiques et des structures de contact.
\newblock In \emph{Colloque {G}\'{e}om. {D}iff. {G}lobale ({B}ruxelles, 1958)},
  pages 37--59. Centre Belge Rech. Math., Louvain, 1959.

\bibitem[Libermann and Marle(1987)]{marle}
Paulette Libermann and Charles-Michel Marle.
\newblock \emph{Symplectic geometry and analytical mechanics}, volume~35 of
  \emph{Mathematics and its Applications}.
\newblock D. Reidel Publishing Co., Dordrecht, 1987.
\newblock ISBN 90-277-2438-5.
\newblock \doi{10.1007/978-94-009-3807-6}.
\newblock URL \url{https://doi.org/10.1007/978-94-009-3807-6}.
\newblock Translated from the French by Bertram Eugene Schwarzbach.

\bibitem[Marrero et~al.(2016)Marrero, de~Diego, and Martínez]{MMdDM2016}
J.~C. Marrero, D.~Martín de~Diego, and E.~Martínez.
\newblock On the exact discrete lagrangian function for variational
  integrators: theory and applications, 2016.
\newblock URL \url{https://arxiv.org/abs/1608.01586}.

\bibitem[Marsden and West(2001)]{marsden-west}
J.~E. Marsden and M.~West.
\newblock Discrete mechanics and variational integrators.
\newblock \emph{Acta Numer.}, 10:\penalty0 357--514, 2001.
\newblock ISSN 0962-4929.
\newblock \doi{10.1017/S096249290100006X}.
\newblock URL \url{http://dx.doi.org/10.1017/S096249290100006X}.

\bibitem[Marthinsen and Owren(2016)]{MaOw16}
H{\aa}kon Marthinsen and Brynjulf Owren.
\newblock Geometric integration of non-autonomous linear {H}amiltonian
  problems.
\newblock \emph{Adv. Comput. Math.}, 42\penalty0 (2):\penalty0 313--332, 2016.
\newblock ISSN 1019-7168.
\newblock \doi{10.1007/s10444-015-9425-0}.
\newblock URL \url{https://doi.org/10.1007/s10444-015-9425-0}.

\bibitem[Mogensen et~al.(2020)Mogensen, Carlsson, Villemot, Lyon, Gomez,
  Rackauckas, Holy, Widmann, Kelman, Karrasch, Levitt, Riseth, Lucibello, Kwon,
  Barton, TagBot, Baran, Lubin, Choudhury, Byrne, Christ, Arakaki, Bojesen,
  benneti, and Macedo]{mogensen2018nlsolve}
Patrick~Kofod Mogensen, Kristoffer Carlsson, Sébastien Villemot, Spencer Lyon,
  Matthieu Gomez, Christopher Rackauckas, Tim Holy, David Widmann, Tony Kelman,
  Daniel Karrasch, Antoine Levitt, Asbjørn~Nilsen Riseth, Carlo Lucibello,
  Changhyun Kwon, David Barton, Julia TagBot, Mateusz Baran, Miles Lubin,
  Sarthak Choudhury, Simon Byrne, Simon Christ, Takafumi Arakaki,
  Troels~Arnfred Bojesen, benneti, and Miguel Raz~Guzmán Macedo.
\newblock Julianlsolvers/nlsolve.jl: v4.5.1, December 2020.
\newblock URL \url{https://doi.org/10.5281/zenodo.4404703}.

\bibitem[Nesterov(1983)]{Nesterov}
Yu.~E. Nesterov.
\newblock A method for solving the convex programming problem with convergence
  rate {$O(1/k^{2})$}.
\newblock \emph{Dokl. Akad. Nauk SSSR}, 269\penalty0 (3):\penalty0 543--547,
  1983.
\newblock ISSN 0002-3264.

\bibitem[Nesterov(2018)]{Book-Nesterov}
Yurii Nesterov.
\newblock \emph{Lectures on convex optimization}, volume 137 of \emph{Springer
  Optimization and Its Applications}.
\newblock Springer, Cham, 2018.
\newblock ISBN 978-3-319-91577-7; 978-3-319-91578-4.
\newblock \doi{10.1007/978-3-319-91578-4}.
\newblock URL \url{https://doi.org/10.1007/978-3-319-91578-4}.
\newblock Second edition of [ MR2142598].

\bibitem[Patrick and Cuell(2009)]{PatrickCuell}
G.~W. Patrick and C.~Cuell.
\newblock Error analysis of variational integrators of unconstrained
  {L}agrangian systems.
\newblock \emph{Numer. Math.}, 113\penalty0 (2):\penalty0 243--264, 2009.
\newblock ISSN 0029-599X.
\newblock \doi{10.1007/s00211-009-0245-3}.
\newblock URL \url{http://dx.doi.org/10.1007/s00211-009-0245-3}.

\bibitem[Polak(1997)]{Polak-book}
Elijah Polak.
\newblock \emph{Optimization}, volume 124 of \emph{Applied Mathematical
  Sciences}.
\newblock Springer-Verlag, New York, 1997.
\newblock ISBN 0-387-94971-2.
\newblock \doi{10.1007/978-1-4612-0663-7}.
\newblock URL \url{https://doi.org/10.1007/978-1-4612-0663-7}.
\newblock Algorithms and consistent approximations.

\bibitem[Polyak(1964)]{Po64}
Boris~T. Polyak.
\newblock Some methods of speeding up the convergence of iterative methods.
\newblock \emph{\v{Z}. Vy\v{c}isl. Mat i Mat. Fiz.}, 4:\penalty0 791--803,
  1964.
\newblock ISSN 0044-4669.

\bibitem[Polyak(1987)]{Po87}
Boris~T. Polyak.
\newblock \emph{Introduction to optimization}.
\newblock Translations Series in Mathematics and Engineering. Optimization
  Software, Inc., Publications Division, New York, 1987.
\newblock ISBN 0-911575-14-6.
\newblock Translated from the Russian, With a foreword by Dimitri P. Bertsekas.

\bibitem[Rosenbrock(1960/61)]{Ro60}
H.~H. Rosenbrock.
\newblock An automatic method for finding the greatest or least value of a
  function.
\newblock \emph{Comput. J.}, 3:\penalty0 175--184, 1960/61.
\newblock ISSN 0010-4620.
\newblock \doi{10.1093/comjnl/3.3.175}.
\newblock URL \url{https://doi.org/10.1093/comjnl/3.3.175}.

\bibitem[Sanz-Serna and Calvo(1994)]{serna}
J.~M. Sanz-Serna and M.~P. Calvo.
\newblock \emph{Numerical {H}amiltonian problems}, volume~7 of \emph{Applied
  Mathematics and Mathematical Computation}.
\newblock Chapman \& Hall, London, 1994.
\newblock ISBN 0-412-54290-0.

\bibitem[Shi et~al.(2019)Shi, S., M.I., and J.U.]{Shi}
B~Shi, Du~S. S., Jordan M.I., and Su~J.U.
\newblock Acceleration via symplectic discretization of high-resolution
  differential equations.
\newblock \emph{arXiv}, 2019.
\newblock URL \url{https://arxiv.org/pdf/1902.03694.pdf}.

\bibitem[Su et~al.(2016)Su, Boyd, and Cand{{\`e}}s]{SuBoCa16}
Weijie Su, Stephen Boyd, and Emmanuel~J. Cand{{\`e}}s.
\newblock A differential equation for modeling nesterov's accelerated gradient
  method: Theory and insights.
\newblock \emph{Journal of Machine Learning Research}, 17\penalty0
  (153):\penalty0 1--43, 2016.
\newblock URL \url{http://jmlr.org/papers/v17/15-084.html}.

\bibitem[Sutskever et~al.(2013)Sutskever, Martens, Dahl, and
  Hinton]{pmlr-v28-sutskever13}
Ilya Sutskever, James Martens, George Dahl, and Geoffrey Hinton.
\newblock On the importance of initialization and momentum in deep learning.
\newblock In Sanjoy Dasgupta and David McAllester, editors, \emph{Advances in
  Neural Information Processing Systems}, volume~28 of \emph{Proceedings of
  Machine Learning Research}, pages 1139--1147, Atlanta, Georgia, USA, 17--19
  Jun 2013. PMLR.
\newblock URL \url{http://proceedings.mlr.press/v28/sutskever13.html}.

\bibitem[Vaquero et~al.(2021)Vaquero, P., and Cort\'es]{vaquero}
M.~Vaquero, Mestres P., and J:~Cort\'es.
\newblock Resource-aware discretization of accelerated optimization flows.
\newblock \emph{Preprint}, 2021.
\newblock URL
  \url{http://carmenere.ucsd.edu/jorge/publications/data/2020_VaMeCo-tac.pdf}.

\bibitem[Wibisono et~al.(2016)Wibisono, Wilson, and Jordan]{WiWiJo16}
Andre Wibisono, Ashia~C. Wilson, and Michael~I. Jordan.
\newblock A variational perspective on accelerated methods in optimization.
\newblock \emph{Proc. Natl. Acad. Sci. USA}, 113\penalty0 (47):\penalty0
  E7351--E7358, 2016.
\newblock ISSN 0027-8424.
\newblock \doi{10.1073/pnas.1614734113}.
\newblock URL \url{https://doi.org/10.1073/pnas.1614734113}.

\bibitem[Wibisono(2016)]{Wi16}
Andre~Yohannes Wibisono.
\newblock \emph{Variational and {D}ynamical {P}erspectives {O}n {L}earning and
  {O}ptimization}.
\newblock ProQuest LLC, Ann Arbor, MI, 2016.
\newblock ISBN 978-1369-05764-5.
\newblock Thesis (Ph.D.)--University of California, Berkeley.

\end{thebibliography}
\end{document}